\newtheorem{theo}{Theorem}[section]
\newtheorem{prop}[theo]{Proposition}
\newtheorem{lemm}[theo]{Lemma}
\newtheorem{coro}[theo]{Corollary}
\newtheorem{defi}[theo]{Definition}
\newtheorem{rema}[theo]{Remark}
\newtheorem{thmnonum}{Theorem}
\DeclareMathOperator{\Diff}{Diff}
\DeclareMathOperator{\Fix}{Fix}
\DeclareMathOperator{\Homeo}{Homeo}
\DeclareMathOperator{\id}{id}
\DeclareMathOperator{\Int}{Int}
\DeclareMathOperator{\PSL}{PSL}
\DeclareMathOperator{\S^1}{S^1}
\DeclareMathOperator{\Stab}{Stab}
\newcommand{\C}{\ensuremath{\mathbb{C}}}
\newcommand{\R}{\ensuremath{\mathbb{R}}}
\newcommand{\Z}{\ensuremath{\mathbb{Z}}}
\begin{document}

\author{Yoshifumi Matsuda}

\address{Graduate School of Mathematical Sciences, 
University of Tokyo, 3-8-1 Komaba Meguro, 
Tokyo 153-8914, Japan}

\email{ymatsuda@ms.u-tokyo.ac.jp}

\keywords{Rotation number, circle diffeomorphisms, groups, 
local vector fields.}

\subjclass[2000]{37E45, 37E10, 57S05, 37B05, 20F67.}

\title[Groups of real analytic diffeomorphisms of the circle]
{Groups of real analytic diffeomorphisms of the circle 
with a finite image under the rotation number function}

\thanks{The author is partially supported by Research Fellowships of 
the Japan Society for the Promotion of Science for Young Scientists.}

\begin{abstract} 
We consider groups of orientation-preserving 
real analytic diffeomorphisms of the circle 
which have a finite image under the rotation number function. 
We show that if such a group is nondiscrete 
with respect to the $C^1$-topology 
then it has a finite orbit. 
As a corollary, we show that if such a group has no finite orbit 
then each of its subgroups contains 
either a cyclic group of finite index 
or a nonabelian free subgroup. 
\end{abstract}

\maketitle

\section{Introduction}
The rotation number is a conjugacy invariant of orientation-preserving 
homeomorphisms of the circle. 
The precise definition of the rotation number will be given in Section 2. 
H. Poincar\'e showed that this invariant contains much information 
about dynamics. 
We quote only a part of his result which is interesting for us. 

\begin{thmnonum}[Poincar\'e]
Let $f$ be an orientation-preserving 
homeomorphism of the circle. 
Then the rotation number $\rho(f)$ of $f$ is rational 
if and only it has a finite orbit. 
More precisely, $\rho(f)=\frac{p}{q}$ \rm{mod} $\Z$ 
for certain coprime integers $p, q \in \Z$ 
if and only if $f$ has an orbit with $q$ points. 
\end{thmnonum}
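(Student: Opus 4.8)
The plan is to fix a lift $F\colon\R\to\R$ of $f$, i.e.\ a homeomorphism with $F(x+1)=F(x)+1$, and to use (taking it from Section~2) the standard facts that $\rho(F)=\lim_{n\to\infty}(F^n(x)-x)/n$ exists independently of $x$, that $\rho(F^k)=k\rho(F)$, and that $\rho(f):=\rho(F)\bmod\Z$ is well defined. The whole statement then rests on one analytic lemma together with an elementary divisibility bookkeeping.

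First I would dispatch the easy implication and its quantitative half. If $f$ has a periodic point $x$ of minimal period $k$, pick a lift $\tilde x$; since $f^k(x)=x$ we get $F^k(\tilde x)=\tilde x+l$ for some $l\in\Z$, hence $F^{kn}(\tilde x)=\tilde x+nl$, and evaluating the defining limit along multiples of $k$ yields $\rho(F)=l/k$. So $\rho(f)$ is rational, of the form $p/q\bmod\Z$ with $q\mid k$; moreover, for the chosen lift, $\rho(F)=a/b$ in lowest terms forces $b\mid k$.

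The core step is the lemma: \emph{if $\rho(g)\equiv 0\bmod\Z$ then $g$ has a fixed point.} To prove it, take the lift $G$ with $\rho(G)=0$ and look at the continuous displacement function $\varphi(x)=G(x)-x$, which is $1$-periodic and therefore has a minimum $c_1$ and a maximum $c_2$. If $g$ had no fixed point, $\varphi$ would omit every integer value; since its image is the interval $[c_1,c_2]$, an intermediate-value argument shows that interval contains no integer, so $[c_1,c_2]\subset(m,m+1)$ for some integer $m$. Telescoping $G^n(x)-x=\sum_{i=0}^{n-1}\varphi(G^i(x))$ gives $(G^n(x)-x)/n\in[c_1,c_2]$, hence $\rho(G)\in[c_1,c_2]\subset(m,m+1)$, which is absurd because $0$ lies in no such interval. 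This is the only place where the topology of the circle genuinely enters, and I expect the clean handling of this step to be the main obstacle; the remaining arguments are formal.

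From the lemma I would derive a rigidity of periods: if $\rho(f)=a/b$ in lowest terms and $x$ is any periodic point of $f$, its minimal period is exactly $b$. Indeed $\rho(f^b)=b\cdot(a/b)\equiv 0$, so $f^b$ has a fixed point $y$; off $y$, $f^b$ is a monotone homeomorphism of an interval, so all periodic points of $f^b$ are fixed, whence $f^b(x)=x$ and the minimal $f$-period of $x$ divides $b$, while the computation above forces $b$ to divide that period. Now assemble: if $\rho(f)\equiv a/b$ in lowest terms, the lemma produces a fixed point of $f^b$, which by rigidity has minimal $f$-period exactly $b$, giving an orbit with exactly $b$ points; conversely, an orbit with exactly $q$ points makes $\rho(f)\equiv p/q\bmod\Z$ rational, say $a/b$ in lowest terms with $b\mid q$, and rigidity says every periodic orbit (in particular this one) has $b$ points, so $q=b$ and $p/q$ is already reduced. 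This is exactly the asserted correspondence, with $q$ the denominator of $\rho(f)$ in lowest terms.
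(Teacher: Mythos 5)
The paper quotes Poincar\'e's theorem without proof, so there is no in-paper argument to compare against; judged on its own, your proof is correct and complete, and it is the standard one. The three pillars are all in place and correctly executed: the displacement-function/intermediate-value lemma showing $\rho(g)\equiv 0 \bmod \Z$ forces a fixed point (the telescoping average argument is the right way to see $\rho(G)\in[c_1,c_2]$), the observation that an orientation-preserving homeomorphism fixing a point acts as an increasing homeomorphism on the complementary interval so that all its periodic points are fixed, and the divisibility bookkeeping ($k\mid b$ from $f^b(x)=x$ and $b\mid k$ from $\rho(F)=l/k$ with $a/b$ reduced) that pins the period down to exactly the reduced denominator.
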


The rotation number defines a map $\rho$ 
from the group $\Homeo_+(\S^1)$ of orientation-preserving homeomorphisms of 
the circle to $\R/\Z$, which we call the rotation number function. 
We are interested in the relation between 
the behavior of the rotation number function on subgroups of $\Homeo_+(\S^1)$ 
and their dynamics. 
The above theorem implies the following corollary. 

\begin{coro}
Let $\Gamma$ be a subgroup of $\Homeo_+(\S^1)$. 
If $\Gamma$ has a finite orbit, 
then it has a finite image under the rotation number function.
\end{coro}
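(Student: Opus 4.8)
The plan is to use Poincar\'e's theorem in the ``only if'' direction quantitatively: a homeomorphism possessing a finite orbit has rational rotation number whose denominator is controlled by the size of that orbit. Suppose $\Gamma$ has a finite orbit $F\subset \S^1$ with $|F|=n$. First I would observe that, since every $g\in\Gamma$ is orientation-preserving and carries $F$ bijectively onto itself, $g$ preserves the cyclic order of the $n$ points of $F$; hence $g$ acts on $F$ as a rotation of the cyclically ordered $n$-gon, i.e.\ there is $k(g)\in\Z/n\Z$ with $g$ sending the $i$-th point to the $(i+k(g))$-th point. In particular $g^n$ fixes every point of $F$, and the assignment $g\mapsto k(g)$ is a group homomorphism $\Gamma\to\Z/n\Z$.

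Next, fix $g\in\Gamma$ and choose any $x\in F$. By the previous step $g^n(x)=x$, so the $g$-orbit of $x$ is finite, say of cardinality $q$, and $q\mid n$. Applying the ``more precise'' part of Poincar\'e's theorem to $g$ together with this orbit of $q$ points yields $\rho(g)=\frac{p}{q}\bmod\Z$ for some integer $p$ coprime to $q$; since $q\mid n$ this gives $\rho(g)\in\frac1n\Z/\Z$. As $g$ was arbitrary, $\rho(\Gamma)\subseteq\frac1n\Z/\Z$, a set with exactly $n$ elements, which proves the corollary. (One can in fact check that $\rho(g)=k(g)/n\bmod\Z$, so $\rho|_\Gamma$ is the homomorphism $g\mapsto k(g)/n$; but only finiteness of the image is needed here.)

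I do not expect a genuine obstacle: the statement follows almost immediately from Poincar\'e's theorem. The only points requiring a little care are the elementary bookkeeping that an orientation-preserving bijection of a finite cyclically ordered subset of $\S^1$ acts as a cyclic rotation, and the divisibility $q\mid n$; and even the first of these can be bypassed, since for the conclusion it suffices that \emph{some} power of $g$ fixes a point of $F$ (for instance $g^{n!}$), which is immediate because $g$ permutes the finite set $F$. Alternatively, one could avoid orbit-counting entirely by noting $\rho(g^n)=n\,\rho(g)$ in $\R/\Z$ (homogeneity of the translation number on a lift) together with $\rho(g^n)=0$, but the direct orbit argument above is cleaner.
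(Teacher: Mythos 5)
Your argument is correct and follows the route the paper intends: the paper states Corollary 1.1 as an immediate consequence of Poincar\'e's theorem and omits the details, which you supply accurately (every $g\in\Gamma$ permutes the finite orbit $F$ as a cyclic shift, so some $g$-orbit inside $F$ has cardinality $q$ dividing $n=|F|$, whence $\rho(g)\in\frac1n\Z/\Z$ by Poincar\'e). Your parenthetical alternative via $n\rho(g)=\rho(g^n)=0$ is equally valid and arguably the shortest complete justification.
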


It follows from the above theorem that 
the converse of Corollary 1.1 holds true 
for every cyclic subgroup of $\Homeo_+(\S^1)$. 
More generally, it holds true for every subgroup of $\Homeo_+(\S^1)$ 
preserving a probability measure on the circle (see Proposition 2.4). 

However the converse of Corollary 1.1 does not hold true in general 
even if we assume that $\Gamma$ is a subgroup of 
the group $\Diff^{\omega}_+(\S^1)$ of orientation-preserving 
real analytic diffeomorphisms of the circle. 
In fact, there exists a subgroup of $\Diff^{\omega}_+(\S^1)$ 
which has a finite image under the rotation number function 
and has no finite orbit. 

The projective group $\PSL(2, \R)$ acts naturally on the real projective line. 
By identifying the real projective line with the circle, 
we regard $\PSL(2, \R)$ as a subgroup of $\Diff^{\omega}_+(\S^1)$. 
It follows from Selberg's Lemma (see Lemma 8 of \cite{Sel}) that 
every finitely generated and discrete subgroup of $\PSL(2,\R)$ 
has a finite image under the rotation number function. 
Furthermore, there exist such subgroups 
which have no finite orbit in the circle such as $\PSL(2, \Z)$. 

On the other hand, after several works of 
J. Nielsen, W. Fenchel and A. Selberg, 
T. J{\o}rgensen gave a criterion for subgroups of $\PSL(2, \R)$ to 
be Fuchsian groups. 
In our context, his criterion is expressed as follows 
(In fact, he obtained a stronger result. See Theorem 2 of \cite{Jo}):

\begin{thmnonum}[J\o rgensen] 
Let $\Gamma$ be a subgroup of $\PSL(2, \R)$ 
which has no finite orbit in the circle. 
If $\Gamma$ has a finite image under the rotation number function, 
then it is a discrete subgroup of $\PSL(2, \R)$.
\end{thmnonum}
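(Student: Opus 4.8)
The plan is to pass to the closure of $\Gamma$ in $\PSL(2,\R)$ and then rule out, one case at a time, every way in which that closure could fail to be discrete.

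First I would record two reductions. The rotation number function $\rho$ is continuous on $\Homeo_+(S^1)$ for the $C^0$-topology, hence on $\PSL(2,\R)$; on the latter it is in fact explicit, being $0$ on parabolic and hyperbolic elements and equal to the rotation angle on the open set of elliptic elements. Since a finite subset of $\R/\Z$ is closed, continuity gives $\rho(\overline\Gamma)\subseteq\overline{\rho(\Gamma)}=\rho(\Gamma)$, so $\overline\Gamma$ still has a finite image under $\rho$. Likewise any finite orbit of $\overline\Gamma$ would be a finite $\Gamma$-invariant set, so $\overline\Gamma$ has no finite orbit. Finally, if $\overline\Gamma$ is discrete then $\Gamma$, being dense in it, equals $\overline\Gamma$ and is discrete. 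So it suffices to prove the statement for a closed subgroup $H=\overline\Gamma$.

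Suppose $H$ is not discrete. Then the identity component $H^0$ is a connected Lie subgroup of $\PSL(2,\R)$ of dimension $1$, $2$ or $3$, and $H$ normalizes $H^0$. Using the classification of connected subgroups of $\PSL(2,\R)$ up to conjugacy: if $\dim H^0=3$ then $H^0=\PSL(2,\R)$, which contains elliptic elements with irrational rotation number, so $\rho(H)\supseteq\rho(H^0)$ is infinite, a contradiction. If $\dim H^0=1$ and $H^0$ is the elliptic one-parameter subgroup (a conjugate of the rotation group), the same contradiction applies, since that subgroup realizes every rotation number. In the remaining cases $H^0$ has a nonempty finite fixed-point set on the circle: one point when $H^0$ is conjugate to the affine (Borel) subgroup ($\dim H^0=2$) or to a parabolic one-parameter subgroup ($\dim H^0=1$), and exactly two points when $H^0$ is conjugate to a hyperbolic one-parameter subgroup ($\dim H^0=1$). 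Since $H$ normalizes $H^0$ it preserves $\Fix(H^0)$, which is then a finite $H$-orbit, contradicting the hypothesis. All cases being impossible, $H$ is discrete, and therefore so is $\Gamma$.

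The genuine input is the conjugacy classification of the one- and two-dimensional connected subgroups of $\PSL(2,\R)$ together with their fixed-point sets on the circle, which is classical, and this is the only bookkeeping I would be careful about. A useful check is that the two hypotheses are used in complementary ranges: ``finite image under $\rho$'' disposes of the three-dimensional and the elliptic one-dimensional cases, while ``no finite orbit'' disposes of the two-dimensional and the parabolic/hyperbolic one-dimensional cases.
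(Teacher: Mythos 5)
The paper offers no proof of this statement: it is quoted as background and attributed to J{\o}rgensen, with a pointer to Theorem~2 of \cite{Jo} for a stronger result. So I can only judge your argument on its own, and it is correct and complete for the version stated here. The two reductions are sound: $\rho$ is continuous on $\Homeo_+(S^1)$ in the $C^0$-topology (hence on $\PSL(2,\R)$, where it is $0$ off the elliptic locus), so a finite --- hence closed --- image persists under passage to $H=\overline{\Gamma}$; a finite $H$-orbit is a finite $\Gamma$-invariant set; and a subgroup of a discrete group is discrete. The case analysis is the standard classification of positive-dimensional connected subgroups of $\PSL(2,\R)$ up to conjugacy ($\PSL(2,\R)$ itself, the Borel subgroup, and the elliptic, parabolic and hyperbolic one-parameter subgroups), $H^0$ is normal in $H$ so $\Fix(H^0)$ is $H$-invariant, and your observation that the two hypotheses split the five cases between them is exactly right. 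It is worth noting why this soft argument proves the stated theorem but not J{\o}rgensen's actual, stronger one (that a non-elementary subgroup of $\PSL(2,\R)$ all of whose elliptic elements have finite order is discrete): finiteness of $\rho(\Gamma)$ forces every $\rho(\gamma)$ to be rational, i.e.\ every elliptic element to have finite order, but that finite-order condition by itself does not pass to the closure, whereas finiteness of the image of $\rho$ does, by continuity. This is precisely the extra leverage your proof exploits, and it is also why the paper's own Theorem~1.2 --- which is the contrapositive statement for $\Diff^{\omega}_+(S^1)$, where no closed-subgroup classification is available --- requires the much more delicate machinery of local vector fields in the $C^0$-closure developed in Section~3.
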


This theorem implies that the converse of Corollary 1.1 holds true 
for nondiscrete subgroups of $\PSL(2, \R)$. 
In this paper, we show that the converse of Corollary 1.1 holds true 
for subgroups of $\Diff^{\omega}_+(\S^1)$ which are nondiscrete 
with respect to the $C^1$-topology. 
Our main result is the following:

\begin{theo}
Let $\Gamma$ be a subgroup of $\Diff^{\omega}_+(\S^1)$ 
which is nondiscrete with respect to the $C^1$-topology. 
Then $\Gamma$ has a finite image under the rotation number function
if and only if it has a finite orbit.
\end{theo}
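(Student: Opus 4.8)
The implication that a finite orbit yields a finite image under $\rho$ is exactly Corollary 1.1, so the point is the converse, which I would establish by contradiction: suppose $\Gamma\subseteq\Diff^{\omega}_+(\S^1)$ is nondiscrete for the $C^1$-topology, has a finite image under $\rho$, but has no finite orbit. Since $\rho$ is continuous for the $C^0$-topology on $\Homeo_+(\S^1)$ and $\rho(\Gamma)$ is a finite set containing $0$, there is a $C^0$-neighborhood of $\id$ on which the only value of $\rho$ attained by elements of $\Gamma$ is $0$, so by Poincar\'e's theorem every such element has a fixed point. Nondiscreteness for the $C^1$-topology furnishes $\gamma_n\to\id$ in $C^1$ with $\gamma_n\neq\id$, hence for $n$ large $\gamma_n$ has a fixed point; and since $\gamma_n$ is real analytic and nontrivial, $\Fix(\gamma_n)$ is finite and nonempty. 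The plan is to feed these maps into a renormalization procedure that, using real analyticity in an essential way, produces a nontrivial one-parameter family of diffeomorphisms in a suitable closure of $\Gamma$, and then to argue that such a family forces $\rho$ to attain infinitely many values on that closure, contradicting finiteness of $\rho(\Gamma)$.

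The heart of the argument is this renormalization step, in the spirit of Ghys's study of groups generated by diffeomorphisms close to the identity and of the Nakai and Loray--Rebelo techniques for germs of analytic diffeomorphisms. On a connected component $I_n$ of $\S^1\setminus\Fix(\gamma_n)$ the diffeomorphism $\gamma_n$ displaces points monotonically; after conjugating by a sequence of analytic charts that renormalize a carefully chosen component $I_n$ onto a fixed interval $J$, and after replacing $\gamma_n$ by an appropriate power $\gamma_n^{k_n}$ so that the cumulative displacement on $J$ stays bounded away from $0$ and from escaping, one extracts in the limit a nontrivial analytic vector field $X$ on an open interval $J\subseteq\S^1$ whose time-$t$ flow $\varphi^t_X$ lies in the closure of $\Gamma$ for $C^1$-convergence on compact subsets of $J$. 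The role of analyticity is to supply Koebe-type bounded-distortion control that keeps the renormalized maps from degenerating and guarantees that the limiting flow is genuine and nontrivial. I expect this step to be the main obstacle: one must choose the component, the renormalization scale, and the number of iterates simultaneously and compatibly, and must control the regularity of the limit well enough that it remains usable for the global part of the argument.

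For the endgame I would use the hypothesis that $\Gamma$ has no finite orbit. First one should reduce to the case that $\Gamma$ acts minimally: if $\Gamma$ had an exceptional minimal (Cantor) set, then for $n$ large every sufficiently long gap would be invariant under $\gamma_n$, which constrains the renormalization to the dynamics on the Cantor set and its gaps, and combined with the absence of a finite orbit this should be pushed to a contradiction. Assuming $\Gamma$ minimal, I would transport the local flow around the circle: conjugating $\varphi^t_X$ by elements of $\Gamma$ (whose conjugate flows again lie in the closure of $\Gamma$) and using minimality to cover $\S^1$ by finitely many intervals each carrying such a local flow, then composing these local flows cyclically around the circle, one produces, for a continuum of parameter values, diffeomorphisms in the $C^1$-closure of $\Gamma$ that have no fixed point and whose rotation number varies continuously and non-trivially --- the local model being the product of the two parabolic one-parameter subgroups $\bigl(\begin{smallmatrix}1&s\\0&1\end{smallmatrix}\bigr)$ and $\bigl(\begin{smallmatrix}1&0\\-t&1\end{smallmatrix}\bigr)$ of $\PSL(2,\R)$, whose product has trace $2-st$ and is therefore elliptic for $st\in(0,4)$. (If the renormalization instead yields a nowhere-vanishing analytic vector field on all of $\S^1$, this is immediate, its flow being topologically conjugate to the flow of rotations.) Consequently $\rho$ attains infinitely many values on the $C^1$-closure of $\Gamma$; but $C^1$-convergence implies $C^0$-convergence, $\rho$ is $C^0$-continuous, and $\rho(\Gamma)$ is finite, hence closed, so the image of $\rho$ on the closure equals $\rho(\Gamma)$ --- a contradiction. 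Therefore $\Gamma$ must have a finite orbit.
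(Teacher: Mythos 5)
Your overall architecture is right, and the first two thirds track the paper: the forward direction is Corollary 1.1; the reduction to a minimal action with no invariant measure is Propositions 2.4 and 3.2 (the exceptional-minimal-set case is excluded essentially as you suggest, the needed input being Hector's theorem that stabilizers of gap endpoints are cyclic); and your renormalization step is Proposition 3.9 (via Lemma 3.4 and the Nakai--Rebelo estimates), which produces a nowhere-zero local $C^0$-vector field on an interval whose time-$t$ maps are $C^0$-limits, on compact subintervals, of elements of $\Gamma$. (Note the limit field is only $C^0$, not analytic, and the convergence obtained is $C^0$, not $C^1$; this is harmless since $\rho$ is $C^0$-continuous.)

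The gap is in your endgame. The time-$t$ map $\varphi^t$ is approximated by elements $g\in\Gamma$ only on a compact subinterval of its flow box; outside that subinterval the approximating elements are completely uncontrolled. So if you cover the circle by flow boxes and compose the corresponding approximants ``cyclically'', you have no control over where the composition sends a point that is not, at each stage, sitting in the chart where the next factor is controlled --- in particular you cannot rule out fixed points of the composition, which is exactly what you must do to certify a nonzero rotation number. (Certifying $\rho\neq 0$ needs global information: either no fixed point anywhere, or an explicit orbit with $\tilde G^{\,n}(x)>x+1$; neither is visible from the local data.) The paper resolves this with a device your sketch is missing: the Margulis--Ghys contraction machinery (Proposition 2.6 and Corollary 2.7) supplies a single, globally controlled element $h\in\Gamma$ whose fixed-point set is nonempty and confined to the small intervals $\theta^j(I_0)$ and which maps the complement of those intervals into their interiors. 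One then takes $f_i=h^{-1}g_i$ with $g_i$ approximating $\varphi^{t_i}$, $t_i\downarrow T$, on the single interval $I_0$; the global tameness of $h$ substitutes for any control of $g_i$ outside $I_0$, and a direct estimate gives $0<\tau(\tilde f_i)<1$ with $\tau(\tilde f_i)\to 0$, i.e.\ infinitely many rotation numbers realized in $\Gamma$ itself (so your closing continuity-and-closedness step is not even needed). Your $\PSL(2,\R)$ parabolic-times-parabolic heuristic is the right intuition, but to implement it at least one factor must be globally controlled; without something like Corollary 2.7 the composition argument does not close.
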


Before describing an application of Theorem 1.2, 
we recall the following result due to G. Margulis 
(see Theorem 3 of \cite{Ma}):

\begin{thmnonum}[Margulis]
Let $\Gamma$ be a subgroup of $\Homeo_+(\S^1)$. 
Then either 
there exists a $\Gamma$-invariant probability measure on the circle or
$\Gamma$ contains a nonabelian free subgroup. 
\end{thmnonum}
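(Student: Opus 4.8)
The plan is to prove the dichotomy by producing the second alternative whenever the first fails, so suppose that $\Gamma\le\Homeo_+(\S^1)$ preserves no probability measure on the circle; I will exhibit a nonabelian free subgroup. First I would carry out the standard reductions. If $\Gamma$ had a finite orbit $F$, the normalized counting measure on $F$ would be $\Gamma$-invariant, contrary to hypothesis, so $\Gamma$ has no finite orbit. By the classical theory of minimal sets for group actions on $\S^1$, such a $\Gamma$ either acts minimally or admits a unique minimal set which is a Cantor set. In the Cantor case I would collapse the complementary gaps to obtain a minimal action of a quotient-image $\bar\Gamma$ together with a monotone degree-one equivariant map $\pi\colon\S^1\to\S^1$. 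An invariant measure for $\bar\Gamma$ then pulls back through the Cantor set to a $\Gamma$-invariant measure, while a reduced word in two elements of $\Gamma$ whose image is nontrivial in $\bar\Gamma$ is a fortiori nontrivial in $\Gamma$; hence a free subgroup of $\bar\Gamma$ lifts to one of $\Gamma$. Both alternatives therefore transfer, and it suffices to treat the case in which $\Gamma$ acts minimally and preserves no probability measure.

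The core of the argument is the following contraction phenomenon: for every pair of disjoint nonempty open arcs $U,V\subset\S^1$ there exists $g\in\Gamma$ with
\[
g(\S^1\setminus U)\subset V .
\]
To establish it I would study the induced action of $\Gamma$ on the compact convex space $\mathcal M(\S^1)$ of Borel probability measures, equipped with the weak-$*$ topology; by hypothesis this action has no fixed point. Choosing, via Zorn's lemma, a nonempty closed $\Gamma$-minimal subset $K\subseteq\mathcal M(\S^1)$, the decisive step is to show that the absence of an invariant measure forces $K$ to contain a Dirac mass, that is, the action is \emph{strongly proximal}. Minimality of the action on $\S^1$ then propagates such Dirac limits over a dense set of points and yields sequences $g_n\in\Gamma$ converging, uniformly off a single point, to constant maps. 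Concretely this produces, for prescribed short target arcs, elements pushing the complement of a short arc into another short arc; a further application of minimality to relocate the source and sink arcs upgrades this to the displayed statement.

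With the contraction statement available I would run a ping-pong argument. Fix four pairwise disjoint nonempty open arcs $A_a,R_a,A_b,R_b$. Applying the contraction statement with $(U,V)=(R_a,A_a)$ produces $a\in\Gamma$ with $a(\S^1\setminus R_a)\subset A_a$, which upon taking complements automatically gives $a^{-1}(\S^1\setminus A_a)\subset R_a$; applying it with $(U,V)=(R_b,A_b)$ produces $b\in\Gamma$ with the analogous property. Thus $a^{\pm 1}$ maps the complement of one of its arcs into the other, and likewise for $b^{\pm 1}$, with all four arcs disjoint. The Table-Tennis Lemma then shows that every nontrivial reduced word in $a$ and $b$ carries one of the four arcs into another and is therefore nontrivial, so $\langle a,b\rangle$ is free of rank two, completing the proof.

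The main obstacle is precisely the contraction statement, namely converting the purely measure-theoretic hypothesis ``no invariant probability measure'' into genuine topological contraction. The delicate point is showing that the $\Gamma$-minimal set $K\subseteq\mathcal M(\S^1)$ must contain a Dirac mass: one has to rule out that $K$ consists entirely of non-atomic or otherwise spread-out measures, and it is here that minimality of the circle action and the failure of any amenability-type fixed-point reasoning genuinely interact. Once this strong proximality is in hand, the reductions of the first paragraph and the ping-pong of the third are routine.
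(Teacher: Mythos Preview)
The paper does not actually give its own proof of Margulis' theorem; it merely cites \cite{Ma} and \cite{Gh}. So there is no in-paper argument to compare against, only the cited literature. Your outline is recognizably the Margulis--Ghys strategy (reduce to a minimal action, establish a contraction/proximality phenomenon, then play ping-pong), and the reductions and the ping-pong step are set up correctly.

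There is, however, a genuine error in your formulation of the ``core'' step. You assert that for a minimal $\Gamma$ with no invariant probability measure, for \emph{every} pair of disjoint open arcs $U,V$ there is $g\in\Gamma$ with $g(\S^1\setminus U)\subset V$, equivalently that the minimal set $K\subset\mathcal M(\S^1)$ contains a Dirac mass. This is false in general. A minimal $\Gamma$ without invariant measure may commute with a nontrivial finite-order rotation $\theta$ (exactly the $\theta$ of Proposition~2.6 in the paper, with period $\kappa>1$; concrete examples arise by lifting a cocompact Fuchsian action to a $\kappa$-fold cover). In that situation no single proper arc can absorb the complement of another, since any $\Gamma$-image of a short arc is again a short arc and $\theta$-equivariance spreads things over $\kappa$ disjoint pieces. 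Correspondingly the $\Gamma$-minimal set $K\subset\mathcal M(\S^1)$ consists not of Dirac masses but of uniform measures $\tfrac{1}{\kappa}\sum_{j=0}^{\kappa-1}\delta_{\theta^j(x)}$ on $\theta$-orbits, so the step ``$K$ must contain a Dirac mass'' fails exactly here.

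The fix is standard and is implicit in the references the paper cites: either pass to the quotient circle $\S^1/\langle\theta\rangle$, where the induced action \emph{is} strongly proximal and your argument goes through verbatim (a free subgroup downstairs lifts as you already argued), or run ping-pong directly upstairs using $\theta$-invariant families of $\kappa$ arcs in place of single arcs. Once you insert this case distinction on $\kappa$, the rest of your sketch is fine.
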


(For the proof, see \cite{Ma} pages 670-674. 
See also \cite{Gh} pages 360-363.) 
Applying Theorem 1.2, we can improve Margulis' theorem 
for certain subgroups of $\Diff^{\omega}_+(\S^1)$. 
The statement is as follows:

\begin{coro}
Let $\Gamma$ be a subgroup of $\Diff^{\omega}_+(\S^1)$
which has no finite orbit. 
Assume that $\Gamma$ has a finite image under the rotation number function. 
Then we have the following: 
\begin{enumerate}
    \item $\Gamma$ is discrete with respect to the $C^1$-topology. 
    \item For every subgroup $\Gamma'$ of $\Gamma$, 
          one of the following occurs: 
    \begin{enumerate}
        \item $\Gamma'$ is a finite group. 
        \item $\Gamma'$ contains an infinite cyclic subgroup of finite index. 
        \item $\Gamma'$ contains a nonabelian free subgroup.
    \end{enumerate}
\end{enumerate}
\end{coro}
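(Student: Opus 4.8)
The plan is to read both parts off Theorem 1.2, Margulis' theorem, Proposition 2.4 and the local rigidity of real analytic diffeomorphisms near a fixed point. Part (i) is immediate: if $\Gamma$ were nondiscrete in the $C^{1}$-topology then, having a finite image under the rotation number function, it would possess a finite orbit by Theorem 1.2, contrary to hypothesis. For part (ii) let $\Gamma'$ be a subgroup of $\Gamma$. It is again $C^{1}$-discrete --- a $C^{1}$-neighbourhood of $\id$ that meets $\Gamma$ only in $\id$ meets $\Gamma'$ only in $\id$ --- and its image under the rotation number function is contained in the finite image of $\Gamma$. By Margulis' theorem either $\Gamma'$ contains a nonabelian free subgroup, which is alternative (c), or it preserves a probability measure on the circle; in the latter case Proposition 2.4 gives $\Gamma'$ a finite orbit $F$. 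Passing to the kernel $\Gamma_{0}$ of the action on $F$, a finite-index subgroup, it suffices to prove that $\Gamma_{0}$ is trivial, infinite cyclic, or contains a nonabelian free subgroup.

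Real analyticity now enters. For each component $J$ of $S^{1}\setminus F$, restriction to $\overline{J}$ --- and likewise passage to the germ at a point of $F$ --- gives an \emph{injective} homomorphism out of $\Gamma_{0}$, since by the identity principle an element that is the identity on the open arc $J$, or near a point of $F$, is the identity on all of $S^{1}$. In particular each nontrivial element of $\Gamma_{0}$ has only finitely many fixed points in every $\overline{J}$, and $\Gamma_{0}$ is torsion free: an element fixing $x\in F$ with derivative $\neq 1$ at $x$ cannot have finite order, and a nontrivial one with derivative $1$ at $x$ differs from $\id$ to finite order there, hence again has infinite order. So a nontrivial $\Gamma_{0}$ is infinite; assuming it has no nonabelian free subgroup, we must show it is infinite cyclic.

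This is the crux, and I expect it to be the main obstacle. The strategy is: first, a subgroup of the group of germs of real analytic diffeomorphisms fixing a point which contains no nonabelian free subgroup should be solvable --- indeed, by Tits-alternative-type results together with Cerveau--Moussu--Shcherbakov-style normal forms, it should be, up to finite index, an abelian group lying inside the flow of a real analytic vector field, possibly together with elements that scale this vector field by a multiplier $\neq 1$. Second, such a scaling element $h$ is incompatible with $C^{1}$-discreteness: if at some point of $F$ the germ of $h$ scales the field by $\lambda\neq1$, then $h^{n}a h^{-n}\to\id$ in $C^{1}$ near that point for $a$ in the abelian part, and, carried out simultaneously at all points of $F$, this produces a sequence in $\Gamma_{0}\setminus\{\id\}$ converging to $\id$ in $C^{1}(S^{1})$, a contradiction; so $\Gamma_{0}$ is, up to finite index, abelian. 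Third, an abelian group of real analytic germs at a fixed point embeds in the additive reals --- it either centralizes a hyperbolic germ, hence lies in the analytic flow linearising it, or consists of germs tangent to the identity with a common formal generator --- so $\Gamma_{0}$ embeds in $\R^{F}$; since the flow of a real analytic vector field is a $C^{1}$-path through $\id$, $C^{1}$-discreteness forces this image to be discrete, whence $\Gamma_{0}\cong\Z^{k}$; and Kopell's lemma (the $C^{1}$-centralizer of a real analytic one-sided contraction is a one-parameter group), applied on each $\overline{J}$ after absorbing into $F$ the interior fixed points of a set of generators, forces the generators to have proportional infinitesimal generators, so $k=1$. Then $\Gamma_{0}$ is infinite cyclic and $\Gamma'$ falls under (b). The real work lies in making the germ-level structure theory precise and in propagating the multiplier, convergence and Kopell arguments uniformly over all points of the finite orbit, with honest bookkeeping of the germs there --- thereby excluding globally the nonabelian free-subgroup-free behaviour that does occur for germs at a single fixed point but, one expects, never for a $C^{1}$-discrete group. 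Everything else is formal.
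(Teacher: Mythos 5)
Your part (i) and the opening reduction of part (ii) (Margulis' theorem, then Proposition 2.4 to get a finite orbit, then passing to a finite-index stabilizer/kernel) coincide with the paper. The divergence, and the gap, is in what you call the crux: showing that the finite-index subgroup fixing the finite orbit is cyclic. You propose to do this by a structure theory for groups of real analytic germs (a Tits-alternative-type dichotomy, normal forms, Kopell's lemma), but you never prove that dichotomy --- you only assert that a free-subgroup-free group of germs ``should be'' virtually abelian inside a flow --- and, as you yourself flag, the real work is left undone. That is already a genuine gap.

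More seriously, the mechanism by which you plan to derive a contradiction is the \emph{global} $C^1$-discreteness of $\Gamma$, and this does not follow from your constructions. Every sequence you produce ($h^n a h^{-n}$ near a fixed point with multiplier $\lambda\neq 1$; elements of a dense subgroup of a Szekeres flow on one complementary interval) converges to the identity only \emph{locally} --- on a neighbourhood of one point of $F$ or on one component of $S^1\setminus F$ --- and there is no reason such elements should be $C^1$-close to the identity on the rest of the circle. Your phrase ``carried out simultaneously at all points of $F$'' is exactly where this fails: you cannot in general arrange simultaneous convergence at an attracting and a repelling end. So local nondiscreteness is all you get, and global discreteness of $\Gamma$ is not contradicted. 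The paper resolves this by a different route: Proposition 3.7 shows that a noncyclic stabilizer forces $\Gamma$ to be \emph{locally} nondiscrete in the $C^1$-topology (via the Szekeres flow in the abelian case and Nakai's local vector fields in the nonabelian case), and then Proposition 4.1 --- which is deliberately stated under the weaker hypothesis of local nondiscreteness --- shows that a locally nondiscrete group with no invariant measure has infinite image under the rotation number function, contradicting the standing hypothesis on $\Gamma$. In other words, the contradiction must come from the rotation-number hypothesis, not from discreteness; your proposal does not use that hypothesis at the crucial step and so cannot close the argument as written.
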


It is known that every hyperbolic group has the same property 
as stated in the second assertion of Corollary 1.3 
 (see \cite{G-H} page 157). 
Hence it would be interesting to know 
if every finitely generated subgroup of $\Diff^{\omega}_+(\S^1)$ 
satisfying the assumption of Corollary 1.3 is a hyperbolic group. 
To the best of my knowledge, 
every known example of subgroups of $\Diff^{\omega}_+(\S^1)$ 
satisfying the assumption of Corollary 1.3 
is conjugate to an extension of a finite group by a Fuchsian group . 

We also show that neither Theorem 1.2 nor Corollary 1.3 holds true 
if we replace $\Diff^{\omega}_+(\S^1)$ by the group $\Diff^{\infty}_+(\S^1)$ 
of orientation-preserving $C^{\infty}$-diffeomorphisms of the circle 
(see Remark 4.4). 

There are two main ingredients to prove Theorem 1.2. 
The first ingredient is the fact that certain subgroups of $\Homeo_+(\S^1)$ 
contain an element whose fixed point set is nonempty and ``small''
(see Corollary 2.7). 
This fact results from an observation of E. Ghys, 
which is based on an argument of G. Margulis (see Proposition 2.6). 
In Section 2 we describe this fact together with several known facts 
about dynamics of subgroups of $\Homeo_+(\S^1)$. 
The second ingredient is the existence of certain local vector fields 
associated to nondiscrete subgroups of $\Diff^{\omega}_+(\S^1)$. 
(see Proposition 3.9). 
This fact is essentially due to I. Nakai \cite{Nak} and J. Rebelo \cite{Re}. 
The detailed argument is given in Section 3. 
In Section 4 we prove Theorem 1.2 and Corollary 1.3.

\section{Dynamics of groups of homeomorphisms of the circle}
We begin this section 
by recalling the definition of the rotation number. 
Denote by $\widetilde{\Homeo_+}(\S^1)$ 
the group of homeomorphisms of the real line $\R$ 
which commute with integer translations. 
Then we can lift every element of $\Homeo_+(\S^1)$ to 
$\widetilde{\Homeo_+}(\S^1)$ and 
two such lifts differ by an integer translation.
Let $\tilde{f}$ be an element of $\widetilde{\Homeo_+}(\S^1)$. 
Then we take a point $x$ of the real line and 
define the $\textit{translation number}$ $\tau(\tilde{f})$ of $\tilde{f}$ 
as follows: 
\begin{eqnarray*}
\tau(\tilde{f}) = \lim_{n \to \infty} \dfrac{\tilde{f}^n(x) - x}{n}. 
\end{eqnarray*}
We can prove that the limit on the right hand side always exists and 
does not depend on the choice of the point $x$. 
It follows from the definition that 
if two elements of $\widetilde{\Homeo_+}(\S^1)$ differ 
by an integer translation 
then their translation numbers differ by an integer. 

Now we consider an element $f$ of ${\Homeo_+}(\S^1)$. 
Then the translation numbers of its lifts in $\widetilde{\Homeo_+}(\S^1)$ 
differ by integers and hence the element 
\begin{eqnarray*}
\rho(f) = \tau(\tilde{f})\ \rm{mod} \Z \in \R/\Z
\end{eqnarray*}
is well-defined. This number $\rho(f)$ is called 
the $\textit{rotation number}$ of the homeomorphism $f$. 

To describe a property of the rotation number, 
we recall the following definition: 

\begin{defi}
\rm{
A map $h$ from the circle to itself is called 
an \textit{increasing continuous map of degree one} 
if it is covered by an increasing continuous map 
from the real line to itself 
which commutes with integer translations.

Let $f$ and $g$ be two elements of $\Homeo_+(\S^1)$. 
We say that $f$ is \textit{semi-conjugate} to $g$ 
if there exists an increasing continuous map $h$ of degree one 
from the circle to itself such that $hf = gh$.
}
\end{defi}

Note that $h$ need not to be a homeomorphism in the above definition. 
The following lemma shows 
that the rotation number is invariant under semi-conjugation 
(see Lemma 5.1.3 of \cite{H-H} for the proof):

\begin{lemm}
Let $f$ and $g$ be two elements of $\Homeo_+(\S^1)$. 
If $f$ is semi-conjugate to $g$, 
then we have $\rho(f)=\rho(g)$. 
\end{lemm}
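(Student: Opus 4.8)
The plan is to lift everything to the real line and to exploit the boundedness of the displacement function of the semi-conjugacy.

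First I would fix lifts so that the intertwining relation holds on the nose. Let $h$ be an increasing continuous degree-one map of the circle with $hf = gh$, and let $\tilde h \colon \R \to \R$ be a nondecreasing lift of $h$ commuting with the unit translation $T \colon x \mapsto x+1$. Choose any lift $\tilde f \in \widetilde{\Homeo_+}(\S^1)$ of $f$. Then $\tilde h \tilde f$ and $\tilde g_0 \tilde h$ (for an arbitrary lift $\tilde g_0$ of $g$) are both lifts of the circle map $hf = gh$, hence differ by $T^k$ for some $k \in \Z$; replacing $\tilde g_0$ by the lift $\tilde g := T^k \tilde g_0$ of $g$, we obtain $\tilde h \tilde f = \tilde g \tilde h$, and therefore $\tilde h \tilde f^{\,n} = \tilde g^{\,n} \tilde h$ for every $n \ge 1$.

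Next I would observe that $\varphi(x) := \tilde h(x) - x$ is $1$-periodic (since $\tilde h T = T \tilde h$) and continuous, hence bounded, say $|\varphi| \le C$. Fixing $x \in \R$ and writing $y = \tilde h(x)$, the relation $\tilde g^{\,n}(y) = \tilde h(\tilde f^{\,n}(x))$ gives
$$\tilde g^{\,n}(y) - y = \bigl(\tilde h(\tilde f^{\,n}(x)) - \tilde f^{\,n}(x)\bigr) + \bigl(\tilde f^{\,n}(x) - x\bigr) - \bigl(\tilde h(x) - x\bigr),$$
so $\bigl| \tilde g^{\,n}(y) - y - (\tilde f^{\,n}(x) - x) \bigr| \le 2C$ for all $n$. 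Dividing by $n$ and letting $n \to \infty$ shows $\tau(\tilde g) = \tau(\tilde f)$, using that the limit defining the translation number exists and is independent of the chosen base point. Reducing modulo $\Z$ yields $\rho(f) = \rho(g)$.

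There is no serious obstacle here. The only points that need care are (i) arranging that the semi-conjugacy relation lifts to an exact equality $\tilde h \tilde f = \tilde g \tilde h$ rather than an equality up to an integer translation, and (ii) the boundedness of $\tilde h - \id$, which is exactly where the hypothesis that $h$ has degree one (equivalently, that $\tilde h$ is $T$-equivariant) is used. The remainder is the standard comparison estimate for translation numbers combined with the two elementary properties of $\tau$ and $\rho$ recalled above.
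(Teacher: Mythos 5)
Your proof is correct; the paper gives no argument for this lemma and simply cites Lemma 5.1.3 of \cite{H-H}, and your proof is exactly the standard one that reference records: normalize the lifts so that $\tilde h\tilde f=\tilde g\tilde h$ holds exactly, use that $\tilde h-\id$ is bounded (which follows already from monotonicity and $T$-equivariance of $\tilde h$, even without continuity), and compare $\tilde g^{\,n}(\tilde h(x))-\tilde h(x)$ with $\tilde f^{\,n}(x)-x$ up to a uniformly bounded error before dividing by $n$. Both delicate points you flag --- fixing the integer ambiguity in the lifted relation and the degree-one hypothesis guaranteeing boundedness --- are handled correctly.
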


The following proposition describes the possibilities 
for dynamics of subgroups of $\Homeo_+(\S^1)$. 
(see Proposition 5.6 of \cite{Gh} for the proof):

\begin{prop}
Let $\Gamma$ be a subgroup of $\Homeo_+(\S^1)$. 
Then exactly one of the following cases occurs:
\begin{enumerate}
\item $\Gamma$ has a finite orbit. 
\item All orbits are dense: 
      in this case $\Gamma$ is said to be \textit{minimal}. 
\item There exists a minimal set $C$ 
      which is homeomorphic to a Cantor set: 
      in this case this set $C$ is unique and is called 
      the \textit{exceptional minimal set}.
\end{enumerate}
\end{prop}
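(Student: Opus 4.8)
The plan is to extract a minimal nonempty closed $\Gamma$-invariant set, classify it topologically into the three listed alternatives, and then show that in the Cantor case it is the unique minimal set and that the three cases are mutually exclusive.

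\emph{Step 1 (a minimal set exists).} Consider the collection $\mathcal{F}$ of all nonempty closed $\Gamma$-invariant subsets of $S^1$, ordered by reverse inclusion. It contains $S^1$, and any chain in $\mathcal{F}$ has an upper bound: the intersection of the sets in the chain is closed and $\Gamma$-invariant, and it is nonempty because $S^1$ is compact and the sets have the finite intersection property. By Zorn's lemma $\mathcal{F}$ has a minimal element $C$, i.e.\ a nonempty closed $\Gamma$-invariant set with no proper nonempty closed $\Gamma$-invariant subset. For any $x\in C$ the orbit closure $\overline{\Gamma x}$ lies in $\mathcal{F}$ and is contained in $C$, hence equals $C$; thus $\Gamma$ acts minimally on $C$ and $C$ is itself an orbit closure.

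\emph{Step 2 (trichotomy for $C$).} If $C$ is finite, it is a single finite orbit and case (i) holds. Otherwise $C$ is infinite, and its derived set (set of accumulation points) is nonempty by compactness, closed, and $\Gamma$-invariant, hence equals $C$ by minimality, so $C$ is perfect. If $C=S^1$, then for every $x\in S^1$ the set $\overline{\Gamma x}$ is a nonempty closed $\Gamma$-invariant subset of the minimal set $S^1$, so it is all of $S^1$; every orbit is dense and case (ii) holds. If $C\subsetneq S^1$, then $\partial C$ is a nonempty (since $S^1$ is connected) closed $\Gamma$-invariant subset of $C$, so $\partial C=C$ and $\Int C=\emptyset$. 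A nonempty compact perfect nowhere dense subset of $S^1$ is totally disconnected — a nondegenerate connected subset of $S^1$ is an arc and has nonempty interior — so $C$ is homeomorphic to the Cantor set, and case (iii) holds.

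\emph{Step 3 (uniqueness in case (iii); exclusivity).} The crucial point is that an exceptional minimal set $C$ satisfies $C\subseteq\overline{\Gamma x}$ for every $x\in S^1$. This is clear when $x\in C$. If $x\notin C$, then $x$ lies in a connected component $I_0=(a_0,b_0)$ of $S^1\setminus C$, with $a_0,b_0\in C$; since each $\gamma\in\Gamma$ maps $C$ onto $C$ it permutes these components, so $\gamma x$ lies in $\gamma I_0$, one of whose endpoints is $\gamma a_0$. Fix $c\in C$ and $\epsilon>0$. Only finitely many components of $S^1\setminus C$ have length at least $\epsilon$, and for each component $I$ the set of points $\gamma a_0$ with $\gamma I_0=I$ has at most two elements, because the stabilizer of $I_0$ permutes its endpoints $\{a_0,b_0\}$; hence the set $D$ of endpoints $\gamma a_0$ for which $\gamma I_0$ has length $\geq\epsilon$ is finite. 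Since $\Gamma a_0$ is dense in the perfect set $C$, the ball of radius $\epsilon$ about $c$ contains infinitely many points of $\Gamma a_0$, hence some $\gamma a_0\notin D$; then $\gamma I_0$ has length $<\epsilon$ and an endpoint within $\epsilon$ of $c$, so $\gamma x$ is within $2\epsilon$ of $c$. As $\epsilon$ was arbitrary, $c\in\overline{\Gamma x}$. Consequently every minimal set contains $C$ and, being minimal, equals $C$, so the exceptional minimal set is unique. Finally the three cases are mutually exclusive: a finite orbit is not dense, so (i) precludes (ii); and if an exceptional minimal set exists then it is the unique minimal set, so there is no finite minimal set and hence no finite orbit, and $S^1$ is not minimal, so (iii) precludes both (i) and (ii). I expect the length estimate in Step 3 — controlling how the components of $S^1\setminus C$ are moved around so that the orbit of an arbitrary point accumulates on all of $C$ — to be the main obstacle.
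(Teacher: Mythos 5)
Your proof is correct, and it is essentially the standard argument that the paper defers to (it cites Proposition 5.6 of Ghys rather than proving the statement itself): Zorn's lemma to produce a minimal set, the derived-set/boundary argument to classify it as finite, all of $S^1$, or a Cantor set, and the gap-length estimate showing that an exceptional minimal set lies in every orbit closure, which gives uniqueness and mutual exclusivity. No gaps.
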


Now we show that 
the converse of Corollary 1.1 holds true for 
every subgroup of $\Homeo_+(\S^1)$ 
preserving a probability measure on the circle.

\begin{prop}
Let $\Gamma$ be a subgroup of $\Homeo_+(\S^1)$. 
Assume that 
there exists a $\Gamma$-invariant probability measure on the circle. 
Then $\Gamma$ has a finite image under the rotation number function 
if and only if it has a finite orbit.
\end{prop}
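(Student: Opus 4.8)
The plan is to establish both directions, the implication ``finite orbit $\Rightarrow$ finite image'' being exactly Corollary 1.1. For the converse, let $\mu$ be a $\Gamma$-invariant probability measure on $\S^1=\R/\Z$ and assume $\rho(\Gamma)$ is finite. First I would dispose of the possibility that $\mu$ has an atom: in that case $m:=\sup_{x\in\S^1}\mu(\{x\})$ is positive and attained, and the set $\{x\in\S^1:\mu(\{x\})=m\}$ is nonempty, finite (it has at most $\lfloor1/m\rfloor$ elements) and $\Gamma$-invariant because $\mu$ is, so $\Gamma$ has a finite orbit and we are done.

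So from now on $\mu$ is atomless, and the idea is to use its distribution function as a semi-conjugacy. Fix a lift $\tilde\mu$ of $\mu$ to $\R$, invariant under integer translations and normalized so that each fundamental domain has mass $1$, and set $\tilde h(x)=\tilde\mu([0,x])$ for $x\ge0$ and $\tilde h(x)=-\tilde\mu([x,0])$ for $x<0$. Because $\mu$ is atomless, $\tilde h$ is continuous, nondecreasing and commutes with integer translations, hence it covers an increasing continuous map $h$ of degree one of $\S^1$; being continuous of degree one, $h$ is surjective. A direct computation using the $\tilde f$-invariance of $\tilde\mu$ gives $\tilde h(\tilde f(x))=\tilde h(x)+\tilde\mu([0,\tilde f(0)])$ for every $f\in\Gamma$ and every lift $\tilde f$ of $f$; passing to $\S^1$ this says $hf=R_\theta h$, where $R_\theta$ denotes the rotation by $\theta:=\tilde\mu([0,\tilde f(0)])\bmod\Z$. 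Thus $f$ is semi-conjugate to $R_\theta$, so Lemma 2.2 yields $\theta=\rho(f)$ and therefore
\[
hf=R_{\rho(f)}\,h\qquad(f\in\Gamma).
\]
Expanding $h\circ(fg)$ in two ways with this identity and using that $h$ is surjective gives $\rho(fg)=\rho(f)+\rho(g)$, so $\rho|_\Gamma\colon\Gamma\to\R/\Z$ is a homomorphism with finite image; hence its kernel $\Gamma_0:=\{f\in\Gamma:\rho(f)=0\}$ has finite index $q:=|\rho(\Gamma)|$ in $\Gamma$.

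It remains to extract a finite $\Gamma$-orbit. For $f\in\Gamma_0$ the displayed identity becomes $hf=h$, so $f$ carries every fibre of $h$ onto itself; fix $c\in\S^1$ and set $F:=h^{-1}(\{c\})$, a nonempty closed set preserved by $\Gamma_0$. Since $h$ is surjective onto $\S^1$, the fibre $F$ is not all of $\S^1$, and being a fibre of an increasing continuous degree-one map it is either a single point or a closed arc properly contained in $\S^1$. In the first case that point is a global fixed point of $\Gamma_0$; in the second, the elements of $\Gamma_0$ are orientation-preserving homeomorphisms preserving the proper arc $F$, so they fix both of its endpoints. In either case $\Gamma_0$ has a global fixed point $p$, so $\Gamma_0\subseteq\Stab_\Gamma(p)$ and $|\Gamma\cdot p|\le[\Gamma:\Gamma_0]=q<\infty$. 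Hence $\Gamma$ has a finite orbit, completing the proof.

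The steps requiring care are routine rather than deep: the separate treatment of atoms (so that $h$ is genuinely continuous) and the verification of the cocycle identity stating that $\tilde h\circ\tilde f-\tilde h$ is constant, which comes from invariance of $\mu$. The conceptual point, and the main thing to get right, is the double role of the semi-conjugacy $h$ together with Lemma 2.2: it upgrades $\rho|_\Gamma$ to a homomorphism, so that ``finite image'' becomes ``finite-index kernel'', and simultaneously it supplies a fibre whose point (or endpoint) is fixed by that kernel, after which the finite orbit of $\Gamma$ drops out of an index computation.
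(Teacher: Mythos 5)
Your proof is correct, but it follows a genuinely different route from the paper's. The paper argues by contraposition and leans on the structural trichotomy of Proposition 2.3: if $\Gamma$ has no finite orbit it is either minimal, in which case the (necessarily atomless, fully supported) invariant measure conjugates $\Gamma$ to a group of rotations on which $\rho$ is injective, hence has infinite image; or it has an exceptional minimal set, in which case one quotients by the semi-conjugacy of Proposition 5.8 of \cite{Gh} to a minimal group preserving a measure and applies Lemma 2.2 to reduce to the first case. You instead argue directly: after disposing of atoms (which immediately yield a finite invariant set, hence a finite orbit), you use the distribution function of the atomless invariant measure as a single semi-conjugacy $h$ of all of $\Gamma$ onto a group of rotations, identify the rotation angles via Lemma 2.2, conclude that $\rho|_\Gamma$ is a homomorphism, and then extract the finite orbit as the $\Gamma$-orbit of a point fixed by the finite-index kernel $\Gamma_0$ (a point or arc-endpoint of a fibre of $h$). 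Your version is self-contained modulo Lemma 2.2 --- it treats the minimal and exceptional cases uniformly and avoids quoting the minimalization result --- and it yields strictly more information: $\rho|_\Gamma$ is a homomorphism and the finite orbit produced has at most $[\Gamma:\Gamma_0]=|\rho(\Gamma)|$ points, in the spirit of Poincar\'e's theorem. The paper's version is shorter given the quoted structural results and makes the dynamical trichotomy explicit. All the delicate points in your write-up check out: the fibre of an increasing continuous degree-one map is a point or a proper closed arc (its lift cannot be constant on an interval of length $1$), an orientation-preserving circle homeomorphism carrying a proper arc onto itself fixes both endpoints, and the surjectivity of $h$ legitimizes the cancellation giving $\rho(fg)=\rho(f)+\rho(g)$.
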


\begin{proof}
Let $\Gamma$ be a subgroup of $\Homeo_+(\S^1)$ 
which preserves a probability measure on the circle. 
Then it suffices to show that if $\Gamma$ has no finite orbit 
then it has an infinite image under the rotation number function. 

If $\Gamma$ is minimal, 
then every invariant probability measure has full support 
and has no atom. 
Hence every invariant probability measure is 
the image of the Lebesgue measure 
under a certain homeomorphism of the circle. 
Therefore $\Gamma$ is conjugate to a group 
which preserves the Lebesgue measure, 
that is, a group consisting of rotations. 
In particular, the rotation number function is injective on $\Gamma$. 
Since $\Gamma$ has no finite orbit, 
it is an infinite group and hence has an infinite image 
under the rotation number function. 

If $\Gamma$ has an exceptional minimal set, 
we can show that 
there exists a homomorphism $\psi$ from $\Gamma$ to $\Homeo_+(\S^1)$ 
such that $\gamma$ is semi-conjugate to $\psi(\gamma)$ 
for every element $\gamma$ in $\Gamma$ 
and $\psi(\Gamma)$ is minimal 
(see Proposition 5.8 of \cite{Gh}). 
Since $\psi(\Gamma)$ preserves a probability measure on the circle 
as well as $\Gamma$, 
the above argument implies that 
$\psi(\Gamma)$ has an infinite image under the rotation number function. 
Then it follows from Lemma 2.2 that 
$\Gamma$ also has an infinite image under the rotation number function. 

Thus we have finished the proof of the proposition.
\end{proof}

Now we consider minimal subgroups of $\Homeo_+(\S^1)$ 
which preserve no probability measure on the circle. 
Such subgroups have a certain type of dynamics. 
To see this, we begin by giving the following definition: 

\begin{defi}
\rm{
Let $\Gamma$ be a subgroup of $\Homeo_+(\S^1)$. 
A closed interval $I$ in the circle is said to be 
\textit{$\Gamma$-contractible} 
if there exists a sequence $\{g_n\}$ of elements of $\Gamma$ 
such that the length of the interval $g_n(I)$ tends to zero. 
}
\end{defi}

The following proposition is based on an argument of G. Margulis 
(see Section 4 of \cite{Ma}) and was formulated by E. Ghys 
(see \cite{Gh} pages 361-363). 

\begin{prop}
Let $\Gamma$ be a minimal subgroup of $\Homeo_+(\S^1)$. 
Assume that there exists no $\Gamma$-invariant probability measure 
on the circle. 
Then there exists an element $\theta$ of $\Homeo_+(\S^1)$ 
satisfying the following conditions: 
\begin{enumerate}
    \item $\theta$ is a periodic homeomorphism 
          which commutes with every element of $\Gamma$.
    \item For every point $x$ in the circle, 
          every closed interval contained in $[x, \theta(x)[$ is 
          $\Gamma$-contractible, 
          where $[x, \theta(x)[$ stands for the whole circle 
          if $\theta$ is the identity. 
\end{enumerate}
\end{prop}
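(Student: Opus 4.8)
The plan is to follow the scheme of Margulis, as reformulated by Ghys: first produce a single $\Gamma$-contractible interval, then use minimality to see that contractible intervals are ``everywhere'', and finally extract $\theta$ from the combinatorics of contractible intervals. The first step is to observe that $\Gamma$ cannot be equicontinuous: if it were, its closure in $\Homeo_+(\S^1)$ for the uniform topology would be compact by Arzel\`a--Ascoli, and averaging the push-forwards of Lebesgue measure over the Haar measure of this compact group would yield a $\Gamma$-invariant probability measure on the circle, against the hypothesis. Hence $\Gamma$ fails to be equicontinuous at some point, so there exist $\varepsilon_0>0$, points $x_n\to x_\infty$, and $g_n\in\Gamma$ with $d(g_n x_n, g_n x_\infty)\ge\varepsilon_0$. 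The arcs $g_n([x_n,x_\infty])$ then have length $\ge\varepsilon_0$; along a subsequence they converge in the Hausdorff sense to an arc of positive length, so a fixed closed subinterval $I$ of positive length is eventually contained in all of them, and $g_n^{-1}(I)\subseteq[x_n,x_\infty]$ has length tending to $0$. Thus $I$ is $\Gamma$-contractible.

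Since a subinterval of a contractible interval is contractible, and a $\Gamma$-translate of a contractible interval is contractible, the union $U$ of all open $\Gamma$-contractible intervals is open and $\Gamma$-invariant; as $\Gamma$ is minimal and $U\neq\emptyset$, we get $U=\S^1$. In particular there is a Lebesgue number $\delta>0$ such that every closed arc of length $<\delta$ is contractible. I would also record the following consequence of minimality, which is the main engine of the construction: for a fixed contractible interval $A$, the set of points $p$ for which some sequence in $\Gamma$ maps $A$ onto arcs converging to $\{p\}$ is nonempty (take a Hausdorff limit of a contracting sequence), closed (diagonalize), and $\Gamma$-invariant, hence it is all of $\S^1$; that is, a contractible interval can be contracted onto a neighbourhood of any prescribed point of the circle.

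Now I would construct $\theta$. Going counterclockwise from a point $x$, let $\theta(x)$ be the supremum of those endpoints $y$ for which the closed arc $[x,y]$ is $\Gamma$-contractible, with the convention $\theta(x)=x$ when every proper arc issuing from $x$ is contractible; by subinterval-closure the set of admissible $y$ is an arc based at $x$, so this is well defined. Monotonicity of $\theta$ in the cyclic sense is immediate from subinterval-closure, and condition (2) of the statement follows directly from the definition (in the degenerate case $\theta=\id$, using that $\Fix(\theta)$ closed and $\Gamma$-invariant forces $\theta=\id$ globally, so every proper arc is contractible). What genuinely requires work is that $\theta$ is a \emph{homeomorphism}: one must rule out jumps and plateaus, i.e. prove that $\theta$ is continuous and strictly monotone. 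This is where one analyses precisely how two overlapping contractible arcs can be concatenated \emph{without wrapping around the circle}, using the Lebesgue number $\delta$, the surjectivity statement of the previous paragraph, and a compactness argument on Hausdorff limits of contracting sequences. I expect this to be the main obstacle: a naive ``the union of two overlapping contractible arcs is contractible'' is false (witness the double cover of a proximal group, where arcs of length $<\tfrac12$ are contractible but their unions need not be), so one has to keep careful track of which concatenations remain inside a single fundamental domain of the symmetry being built. Granting that $\theta$ is a homeomorphism, it commutes with every $g\in\Gamma$, since $g$ carries contractible arcs to contractible arcs and preserves the cyclic order, whence $\theta(g(x))=g(\theta(x))$ straight from the definition.

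It remains to see that $\theta$ is periodic. If $\theta\neq\id$, then $\Fix(\theta)$ is a closed $\Gamma$-invariant set, hence empty by minimality, so $\theta$ is fixed-point-free; its rotation number cannot be irrational, for otherwise $\theta$ would admit a unique invariant probability measure $\mu$, and for every $g\in\Gamma$ the commutation $g\theta=\theta g$ would make $g_*\mu$ another $\theta$-invariant measure, forcing $g_*\mu=\mu$, so that $\mu$ would be $\Gamma$-invariant, contradicting the hypothesis. Thus the rotation number of $\theta$ is some $\tfrac{p}{q}$ with $q\ge 1$, and $\theta^q$ has rotation number $0$, hence a fixed point by Poincar\'e's theorem; then $\Fix(\theta^q)$ is a nonempty closed $\Gamma$-invariant set, so $\Fix(\theta^q)=\S^1$, i.e. $\theta^q=\id$. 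Hence $\theta$ is a periodic homeomorphism commuting with $\Gamma$, establishing (1), and (2) was already built into the definition of $\theta$; the degenerate value $\theta=\id$ simply corresponds to $\Gamma$ being proximal, and then (2) reads that every proper closed arc is $\Gamma$-contractible, which is exactly the statement $U=\S^1$ upgraded via the convention.
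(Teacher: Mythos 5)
Your overall route is the same as the paper's (the Margulis--Ghys construction): non-equicontinuity via Arzel\`a--Ascoli, production of one $\Gamma$-contractible interval from a non-equicontinuous sequence, the definition $\tilde{\theta}(x)=\sup\{y \mid \pi([x,y])\ \textrm{is $\Gamma$-contractible}\}$, commutation with $\Gamma$, and then periodicity. Your periodicity argument is a legitimate variant (unique ergodicity of a circle homeomorphism with irrational rotation number, in place of the paper's ``a homeomorphism commuting with an irrational rotation is a rotation''), and condition (ii) is indeed built into the definition of $\theta$.

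The genuine gap is the step you explicitly concede: you write ``Granting that $\theta$ is a homeomorphism'' and only sketch a programme (Lebesgue numbers, Hausdorff limits, concatenating overlapping contractible arcs ``without wrapping around'') for ruling out plateaus and jumps. Everything downstream --- rotation number, Poincar\'e's theorem, unique ergodicity --- requires $\theta\in\Homeo_+(\S^1)$, so this step is load-bearing and cannot be left as ``the main obstacle''. Moreover, the delicate concatenation analysis you anticipate is not needed: minimality closes the step in two lines. Since $\theta$ commutes with $\Gamma$, the union of the open intervals on which $\theta$ is constant is an open $\Gamma$-invariant set; if $\tilde{\theta}$ were not strictly increasing this set would be nonempty, hence equal to the whole circle by minimality (its complement is closed, invariant and proper), so $\theta$ would be constant and its value would be a global fixed point of $\Gamma$, contradicting minimality. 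Likewise, if $\theta$ had a jump, the interior of the complement of $\theta(\S^1)$ would be a nonempty open $\Gamma$-invariant set, hence the whole circle, forcing the image of $\theta$ to be empty, which is absurd. Thus strict monotonicity and continuity follow at once, with no need to decide when two overlapping contractible arcs can be concatenated. You should supply this (or an equivalent) argument; as written, the proof is incomplete at its central point.
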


\begin{proof}
We first claim that $\Gamma$ is not equicontinuous. 
Indeed, if it is equicontinuous, 
then it follows from Ascoli-Arzela's theorem that 
there exists a $\Gamma$-invariant probability measure on the circle, 
which contradicts the assumption. 

Now we claim that there exists a $\Gamma$-contractible interval in the circle. 
Indeed, since $\Gamma$ is not equicontinuous, 
there exist a sequence $\{I_n\}$ of open intervals in the circle, 
a sequence $\{g_n\}$ of elements of $\Gamma$ and 
a positive real number $\varepsilon > 0$ such that 
the length of $I_n$ tends to zero and 
the length of $g_n(I_n)$ is greater than $\varepsilon$. 
By taking a subsequence of $\{I_n\}$ if necessary, 
we may assume that there exists an open interval $I$ in the circle 
which is contained in $g_n(I_n)$ for every sufficiently large $n$. 
Then it follows that 
$g_n^{-1}(I)\subset I_n$ for every sufficiently large $n$. 
Therefore the length of $g_n^{-1}(I)$ tends to zero and 
hence the interval $I$ is $\Gamma$-contractible.

We denote by $\pi : \R \to \S^1$ the universal cover of the circle. 
For each point $x$ in the real line $\R$, we set 
\begin{eqnarray*}
\tilde{\theta}(x)=\sup\{y \in \R|
\textrm{
$\pi([x, y])$ is a $\Gamma$-contractible closed interval
}\}. 
\end{eqnarray*}
Thus we obtain a map $\tilde{\theta}$ from the real line to itself. 
Since $\tilde{\theta}$ commutes with integral translations 
and hence it induces a map $\theta$ from the circle to itself. 
Note that $\theta$ commutes with every element of $\Gamma$. 

Now we claim that the map $\theta$ is an element of $\Homeo_+(\S^1)$. 
Indeed, the map $\tilde{\theta}$ is increasing. 
If it were not strictly increasing, 
then the union of open intervals in the circle 
on which the map $\theta$ is constant would be 
a nonempty, open and $\Gamma$-invariant subset of the circle. 
The minimality of $\Gamma$ implies that this set is the whole circle. 
Hence the image of $\theta$ is a single point and 
every element of $\Gamma$ fixes this point. 
However this contradicts the minimality of $\Gamma$ and 
hence the map $\tilde{\theta}$ is strictly increasing. 
Similarly, if the map $\theta$ were not continuous, 
then the interior of the complement of the image $\theta(\S^1)$ 
would be the whole circle. 
Therefore the map $\theta$ is continuous 
and we conclude that it is an element of $\Homeo_+(\S^1)$.

Next we claim that the homeomorphism $\theta$ is periodic. 
Indeed, if $\theta$ had a unique exceptional minimal set, 
then this set would also be an exceptional minimal set for $\Gamma$ 
and this contradicts the minimality of $\Gamma$. 
If $\theta$ is minimal, 
it would be conjugate to an irrational rotation. 
Since every element of $\Gamma$ commutes with $\theta$ and
a homeomorphism which commutes with an irrational rotation is a rotation, 
$\Gamma$ is conjugate to a group of rotations. 
However this contradicts the existence of a $\Gamma$-contractible interval. 
Therefore $\theta$ has a finite orbit. 
The union of periodic points of $\theta$ is 
a closed, $\Gamma$-invariant and nonempty subset of the circle. 
Then it follows from the minimality of $\Gamma$ that 
the homeomorphism $\theta$ is periodic. 

Moreover it follows from the definition of $\theta$ that 
the second condition in the proposition is satisfied 
and thus we have finished the proof of the proposition. 
\end{proof}

Proposition 2.6 yields the following corollary, 
which plays an important role in the proof of Theorem 1.2. 

\begin{coro}
Let $\Gamma$ and $\theta$ be as in Proposition 2.6.
Let $\kappa$ denote the period of $\theta$ and 
let $I_0$ be a closed interval in the circle such that 
the intervals $I_0$, $\theta(I_0)$, \ldots, $\theta^{\kappa-1}(I_0)$
are mutually disjoint. 
Then there exists an element $h$ of $\Gamma$ such that 
\begin{eqnarray*}
\emptyset \ne \Fix(h) 
& \subset & \textstyle\bigcup_{j=0}^{\kappa-1}\Int(\theta^j(I_0)) 
\ \textrm{and} \\
h(\S^1 \setminus \textstyle\bigcup_{j=0}^{\kappa-1}\Int(\theta^j(I_0))) & 
\subset & \textstyle\bigcup_{j=0}^{\kappa-1}\Int(\theta^j(I_0)). 
\end{eqnarray*}
\end{coro}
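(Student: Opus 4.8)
The plan is to build $h$ as a composite $h=\gamma_1 g_n$, where $g_n\in\Gamma$ strongly contracts a carefully chosen closed arc and $\gamma_1\in\Gamma$ steers the contracted image into $U:=\bigcup_{j=0}^{\kappa-1}\Int(\theta^j(I_0))$. Throughout I would use that every element of $\Gamma$ commutes with $\theta$. Fixing a conjugacy of $\theta$ onto a rotation makes $\theta$ an isometry; since the $\kappa$ arcs $\theta^j(I_0)$ are pairwise disjoint (and nondegenerate, as the conclusion otherwise fails) they all have one and the same length $\mu>0$, and the $\kappa$ components of $\S^1\setminus U$ all have one and the same length $\lambda$, with $\mu+\lambda=1/\kappa$. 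Write $I_0=[a_0,b_0]$ and let $K_0=[b_0,a']$ be the component of $\S^1\setminus U$ adjacent to $I_0$ on its positive side.

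First I would contract a collar. Fix $0<\epsilon<\mu$ and set $L=[b_0-\epsilon,b_0]\cup K_0=[b_0-\epsilon,a']$, a closed arc of length $\epsilon+\lambda<1/\kappa$. Being shorter than a $\theta$-period, $L$ is contained in some arc $[x,\theta(x)[$ (the whole circle when $\theta=\id$), so $L$ is $\Gamma$-contractible by Proposition 2.6(2). Choose $g_n\in\Gamma$ with the arcs $g_n(L)$ shrinking to a point; after passing to a subsequence, $g_n(L)\to\{c\}$ for some $c$, whence $g_n(\theta^j(L))=\theta^j(g_n(L))\to\{\theta^j(c)\}$ for every $j$. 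Since $b_0-\epsilon\in\Int(I_0)$, the arc $(b_0-\epsilon,b_0)$ is a nonempty open set, so minimality of $\Gamma$ gives $\gamma_1\in\Gamma$ with $p_*:=\gamma_1(c)\in(b_0-\epsilon,b_0)$; then $\theta^j(p_*)=\gamma_1(\theta^j(c))\in U$ for every $j$, since $U$ is open and $\theta$-invariant. Put $h:=\gamma_1 g_n$ with $n$ large. Then each $h(\theta^j(L))=\gamma_1(g_n(\theta^j(L)))$ lies in an arbitrarily small neighbourhood of $\theta^j(p_*)$; in particular $h(\theta^j(K_0))\subset U$ for every $j$, and since the arcs $\theta^j(K_0)$ are precisely the components of $\S^1\setminus U$, this yields $h(\S^1\setminus U)\subset U$. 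The inclusion $\Fix(h)\subset U$ is then automatic, because a fixed point of $h$ outside $U$ would lie in $h(\S^1\setminus U)\subset U$.

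The one real obstacle is to force $\Fix(h)\ne\emptyset$, for $h(\S^1\setminus U)\subset U$ does not imply this by itself (a small enough rotation satisfies it), so the contraction must be exploited. This is exactly why $L$ contains the collar $[b_0-\epsilon,b_0]\subset\Int(I_0)$ and why $p_*$ is placed strictly between $b_0-\epsilon$ and $b_0$: for $n$ large, $h$ sends the whole arc $[b_0-\epsilon,b_0]$ into a neighbourhood of $p_*$ which we may take smaller than the positive distances from $b_0-\epsilon$ to $p_*$ and from $p_*$ to $b_0$, so that $h$ moves $b_0-\epsilon$ forward and $b_0$ backward. Lifting $h$ to $\R$ and applying the intermediate value theorem to its displacement function on $[b_0-\epsilon,b_0]$ then produces a fixed point of $h$ lying in $(b_0-\epsilon,b_0)\subset\Int(I_0)\subset U$. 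Together with the second paragraph, this gives $\emptyset\ne\Fix(h)\subset U$ and $h(\S^1\setminus U)\subset U$, which is the assertion.
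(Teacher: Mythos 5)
Your proof is correct, but it takes a genuinely different route from the paper's. The paper first invokes H\"older's theorem (together with the hypothesis of Proposition 2.6 that there is no invariant probability measure) to produce a nontrivial element $h_0$ with $\Fix(h_0)\ne\emptyset$; it then encloses $\Fix(h_0)\cap[x,\theta(x)[$ in a single $\Gamma$-contractible closed interval $J$, uses contractibility and minimality to find $g$ with $g(J)\subset\Int(I_0)$, sets $h=gh_0g^{-1}$ so that $\Fix(h)\subset\bigcup_j\Int(\theta^j(I_0))$, and finally replaces $h$ by a power to push the complement of $\bigcup_j\Int(\theta^j(I_0))$ inside (using that $h$ is fixed-point free on each complementary component of $\Fix(h)$). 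You instead build $h=\gamma_1 g_n$ from scratch: you contract a collar $L$ consisting of one complementary gap plus a strip of $\Int(I_0)$, steer the contracted image to a point $p_*$ strictly inside that strip, and then extract the fixed point by the intermediate value theorem from the fact that $h$ pushes one endpoint of the strip forward and the other backward. Your argument buys independence from H\"older's theorem and from the ``pass to a power'' step, and it locates the fixed point explicitly; the cost is somewhat more bookkeeping (normalizing $\theta$ to a rotation so that $L$ fits inside a fundamental domain $[x,\theta(x)[$, and checking that the $\theta^j(K_0)$ exhaust the complement --- both of which you handle correctly, the latter because $\theta$ permutes the $\kappa$ complementary components transitively). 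The paper's proof is shorter but leans on more outside input; yours is longer but self-contained given Proposition 2.6. One small point worth making explicit in a write-up: the finitely many open conditions (one for each $j$, plus the two displacement inequalities at $b_0-\epsilon$ and $b_0$) are all satisfied simultaneously for a single sufficiently large $n$.
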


\begin{proof}
We first claim that there exists an element $h_0$ of $\Gamma$ 
which is different from the identity and has a fixed point. 
Indeed, if there were no such element, it follows from H\"older's theorem 
(see Theorem 6.10 of \cite{Gh}) that $\Gamma$ would be abelian and 
would preserve a probability measure on the circle. 

Let $x$ be a point in the complement of $\Fix(h_0)$. 
Then there exists a $\Gamma$-contractible closed interval $J$ in 
$[x, \theta(x)[$ such that $\Fix(h_0) \cap [x, \theta(x)[ \subset J$. 
It follows that $\Fix(h_0) \subset \bigcup_{j=0}^{\kappa-1} \theta^j(J)$. 
Since $J$ is $\Gamma$-contractible and $\Gamma$ is minimal, 
there exists an element $g$ of $\Gamma$ such that $g(J) \subset \Int(I_0)$. 
Now we put $h=gh_0g^{-1}$. 
Then we have 
\begin{eqnarray*}
\Fix(h)=g(\Fix(h_0)) 
\subset \textstyle\bigcup_{j=0}^{\kappa-1} \theta^j(g(J)) 
\subset \textstyle\bigcup_{j=0}^{\kappa-1} \Int(\theta^j(I_0)). 
\end{eqnarray*}
Since $h$ is a translation on each connected component of $\Fix(h)$, 
replacing $h$ by its power if necessary, 
we may assume that 
\begin{eqnarray*}
h(\S^1 \setminus \textstyle\bigcup_{j=0}^{\kappa-1}\Int(\theta^j(I_0))) 
\subset \textstyle\bigcup_{j=0}^{\kappa-1}\Int(\theta^j(I_0)). 
\end{eqnarray*}
Thus we have finished the proof of the corollary.
\end{proof}

\section{Locally nondiscrete subgroups and associated local vector fields}
We begin this section by giving the definition of the local nondiscreteness 
for subgroups of $\Diff^{\omega}_+(\S^1)$. 
\begin{defi}
\rm{
For $0 \le r \le \infty$, 
a subgroup $\Gamma$ of $\Diff^{\omega}_+(\S^1)$ is said to be 
\textit{locally nondiscrete with respect to the $C^r$-topology} 
if there exists a sequence of elements of $\Gamma \setminus \{\id\}$ 
which converges to the identity on an open interval in the circle 
with respect to the $C^r$-topology.
}
\end{defi}

Locally nondiscrete subgroups of $\Diff^{\omega}_+(\S^1)$ are imposed 
restrictions on their dynamics as the following proposition shows. 

\begin{prop}
Let $\Gamma$ be a subgroup of $\Diff^{\omega}_+(\S^1)$ 
which is locally nondiscrete with respect to the $C^0$-topology. 
Then $\Gamma$ has no exceptional minimal set.
\end{prop}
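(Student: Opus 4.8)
The plan is to argue by contradiction: assume $\Gamma$ has an exceptional minimal set $C$ and derive a contradiction with local nondiscreteness. Recall the relevant structure: $C$ is a Cantor set, $S^{1}\setminus C$ is a countable disjoint union of open ``gaps'' which $\Gamma$ permutes, the $\Gamma$-action on $C$ is minimal, and $C$ is nowhere dense. Since by Proposition 2.3 an exceptional minimal set and a finite orbit cannot coexist, $\Gamma$ has no finite orbit; hence the orbit of each gap is infinite, the gap lengths in any single orbit tend to $0$, and the gaps accumulate at every point of $C$. Let $\{f_n\}\subset\Gamma\setminus\{\id\}$ converge to the identity uniformly on an open interval $U$, with $\varepsilon_n:=\sup_{x\in U}d(f_n(x),x)\to 0$. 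I would first reduce to the case $U\cap C\neq\emptyset$ (indeed, to $U$ containing a gap whose closure lies in $U$): if $U$ lies inside a single gap $I$ then $f_n(I)=I$ for all large $n$, since $f_n(U)\cap U\neq\emptyset$, and a separate routine argument, which I would give first, handles this case; below I concentrate on the case $U\cap C\neq\emptyset$.

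The crucial input is a contraction. By Sacksteder's theorem --- applicable since $C^{\omega}\subset C^{2}$ --- there is $g\in\Gamma$ with a fixed point $q\in C$ and $0<g'(q)=\lambda<1$; using minimality of $\Gamma$ on $C$ we may conjugate $g$ so that $q\in U$. Analyticity then allows an appeal to Koenigs' linearization theorem: in a suitable real-analytic coordinate $t$ centred at $q$ we have $g(t)=\lambda t$ on an interval $[-\rho,\rho]\subset U$, and consequently $C$ is self-similar near $q$, namely $\lambda\bigl(C\cap[-\rho,\rho]\bigr)=C\cap[-\lambda\rho,\lambda\rho]$. On one side of $q$ the set $C$ is nontrivial (otherwise $q$ would be isolated in $C$), so there is a gap $G\subset(0,\rho)$, and the rescaled gaps $g^{k}(G)=\lambda^{k}G$ of length $\lambda^{k}|G|\to 0$ accumulate at $q$: the gap pattern of $C$ repeats, rescaled by $\lambda$, at every scale.

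Next I would view the approximating sequence at the right scale. A direct estimate using the linearization shows that if $k_n\to\infty$ is chosen so that $\lambda^{k_n}\gg\varepsilon_n$ while $\lambda^{-k_n}\varepsilon_n\to 0$ --- for instance $\lambda^{k_n}\asymp\sqrt{\varepsilon_n}$ --- then the conjugates $\phi_n:=g^{-k_n}f_ng^{k_n}$ lie in $\Gamma\setminus\{\id\}$ and converge to the identity uniformly on the fixed neighbourhood $[-\rho/2,\rho/2]$ of $q$. Heuristically $\phi_n$ is $f_n$ seen at scale $\lambda^{k_n}$, and the point of the choice of $k_n$ is that this rescaling still yields a sequence tending to the identity near $q$. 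One then wants to conclude as follows: the $\phi_n$ are analytic, preserve the self-similar Cantor set $C$, and cluster towards the identity near $q$; combined with the hyperbolic scaling symmetry coming from $g$, this should force the germ of $C$ at $q$ to be invariant under a group of local scalings that is dense in $(\R_{>0},\times)$, hence under all of it, so that $C$ contains an interval about $q$ --- contradicting the fact that $C$ is nowhere dense.

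The main obstacle is exactly this last step. We control $f_n$, and hence $\phi_n$, only in the $C^{0}$-topology --- there are no bounds on derivatives --- and the $\phi_n$ need not fix $q$ nor commute with $g$; so one cannot simply read off a scaling symmetry of $C$, but must extract a genuine local symmetry in the limit from a family of merely approximate relations. This is precisely where distortion control would normally be used, and its absence makes the argument delicate; in effect one is forced into a local-dynamics argument of Nakai--Rebelo type, and carrying it out carefully is the real content of the proof. (By contrast the abelian case is immediate and uses none of this: if $\Gamma$ were abelian, every $\gamma\in\Gamma$ would commute with $g$, so $\gamma(q)\in\Fix(g)$, whence the orbit $\Gamma q\subset\Fix(g)$ would be finite --- impossible.)
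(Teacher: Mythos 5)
There is a genuine gap, and in fact two problems. First, your main line of attack (Sacksteder, Koenigs linearization, self-similarity of $C$, and the rescaled conjugates $\phi_n=g^{-k_n}f_ng^{k_n}$) is never brought to a contradiction: as you yourself concede, with only $C^0$ control on $f_n$, and with $\phi_n$ neither fixing $q$ nor commuting with $g$, you cannot extract the ``dense group of local scalings'' that would force $C$ to contain an interval. That final step is not a technicality to be deferred --- it is the entire proof, and no mechanism for carrying it out is indicated. A proof that ends by naming the obstacle is not a proof.

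Second, and more tellingly, the case you set aside as ``a separate routine argument'' is actually the whole story, and it is not routine without a further input you never name. Since $C$ is nowhere dense, \emph{every} open interval $U$ on which $f_n\to\id$ meets some gap $J$ of $C$ in a nonempty open set; for $n$ large $f_n(J)\cap J\neq\emptyset$, and since $\Gamma$ permutes the gaps, $f_n$ preserves $J$ and fixes its endpoint $a\in C$. So your case division is unnecessary --- one is always reduced to a sequence of nontrivial elements of $\Stab_a(\Gamma)$ converging to the identity on $U\cap J$. The point is that this alone is not yet a contradiction: one needs the theorem of Hector (valid for real-analytic groups with an exceptional minimal set; see Navas) that $\Stab_a(\Gamma)$ is cyclic, after which a cyclic group generated by a nontrivial analytic diffeomorphism fixing $a$ visibly cannot accumulate at the identity on an open interval. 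This is exactly the paper's (short) proof. Your proposal never invokes Hector's theorem or any substitute for it, so even the ``easy'' case is left without its key ingredient; replacing it by the Sacksteder--Koenigs machinery does not help, because that machinery is precisely where your argument stalls.
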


\begin{proof}
We prove by contradiction. 
Suppose that $\Gamma$ has an exceptional minimal set $C$ 
and is locally nondiscrete with respect to the $C^0$-topology. 
Then there exist an open interval $I$ in the circle 
and a sequence $\{g_n\}$ of elements of $\Gamma \setminus \{\id\}$ 
which converges to the identity on $I$ 
with respect to the $C^0$-topology. 
Let $J$ be a connected component of 
the complement of $C$ which intersects $I$ 
and let $a$ be an endpoint of $J$.

We claim that the stabilizer $\Stab_a(\Gamma)$ of $a$ is nontrivial. 
Indeed, the assumption implies that for every $n$ sufficiently large, 
the image $g_n(J)$ intersects the component $J$. 
On the other hand, the image $g_n(J)$ is 
a connected component of the complement of $C$ as well as $J$. 
Therefore, for every $n$ sufficiently large, 
$g_n$ preserves the connected component $J$ 
and fixes its endpoints. 

Since $\Gamma$ has an exceptional minimal set, 
it follows from a theorem of G. Hector 
that the stabilizer $\Stab_a(\Gamma)$ of $a$ must be cyclic 
(see page 461 of \cite{Nav}). 
Hence there exists no sequence of elements of 
$\Stab_a(\Gamma) \setminus \{\id\}$ which converges to the identity 
on $I \cap J$. 
This contradicts the fact that for every $n$ sufficiently large, 
$g_n$ belongs to $\Stab_a(\Gamma) \setminus \{\id\}$. 
Thus we have finished the proof of the proposition.
\end{proof}

In the sequel, we associate certain local vector fields to 
locally nondiscrete subgroups of $\Diff^{\omega}_+(\S^1)$. 
The local vector fields we associate have the property 
described in the following definition.

\begin{defi}
\rm{
Let $\Gamma$ be a subgroup of $\Diff^{\omega}_+(\S^1)$ and 
let $J$ be an open interval in the circle. 
For $0 \le r \le \infty$, 
a local vector field $X$ defined on $J$ is said to be 
\textit{in the $C^r$-closure of $\Gamma$ relative to $I$} 
if the following condition is satisfied: 
for every compact subinterval $J_0$ of $J$ and 
for every positive real number $t_0 > 0$ such that the local flow $\varphi^t$ 
associated to $X$ is defined on $J_0$ for every $0 \leq t \leq t_0$, 
the map $\varphi^{t_0}$ is the $C^r$-limit of 
the restriction to $J_0$ of a sequence of elements of $\Gamma$.
}
\end{defi}

The following lemma is used to show the existence of a local vector field 
which is in the $C^0$-closure for certain nondiscrete subgroups of 
$\Diff^{\omega}_+(\S^1)$ (cf. Proposition 3.7 and 3.9). 

\begin{lemm}
Let $\{g_n\}$ be a sequence of $C^1$-maps 
from an open interval $I$ in the real line $\R$ into $\R$ 
and let $\{\lambda_n\}$ be a sequence  
of positive real numbers which diverges to the infinity. 
Assume that there exist positive real numbers $A_1$, $A_2$ and $A_3$ 
satisfying the following: 
\begin{eqnarray}
A_1 
\le \inf_{x \in I} \lambda_n|(g_n-\id)(x)| 
\le \sup_{x \in I} \lambda_n|(g_n-\id)(x)| 
& \le & A_2 \quad \textrm{and} \\
\sup_{x \in I} \lambda_n|(g_n-\id)'(x)| 
& \le & A_3. 
\end{eqnarray}
Then for each open and relatively compact subinterval $J$ of $I$ 
there exists a nowhere vanishing $C^0$-vector field $X$ on $J$ 
which satisfies the following condition:
for every compact subinterval $J_0$ of $J$ and 
for every positive real number $t_0 > 0$ such that the local flow $\varphi^t$ 
associated to $X$ is defined on $J_0$ for every $0 \le t \le t_0$, 
the sequence $\{g_n^{[\lambda_nt_0]}\}$ 
(where $[\lambda_nt_0]$ stands for the integral part of $\lambda_nt_0$) 
converges to the map $\varphi^{t_0}$ uniformly on $J_0$.  
\end{lemm}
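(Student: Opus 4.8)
The plan is to recognise the iteration of $g_n$ as the Euler polygon of step size $1/\lambda_n$ for the rescaled displacement field $\xi_n := \lambda_n(g_n-\id)$, and to take $X$ to be a subsequential limit of these fields. The two assumed inequalities read $A_1 \le |\xi_n| \le A_2$ and $|\xi_n'| \le A_3$ on $I$, so $\{\xi_n\}$ is bounded and uniformly Lipschitz on $I$, hence equicontinuous. Fixing the open and relatively compact interval $J \subset I$, I would invoke the Ascoli--Arzela theorem to pass to a subsequence along which $\xi_n$ converges uniformly on $\overline{J}$ to a continuous, $A_3$-Lipschitz function $\xi$ with $A_1 \le |\xi| \le A_2$, in particular nowhere vanishing; then $X := \xi\,\tfrac{d}{dx}$ is the required $C^0$ vector field on $J$. (Strictly, $X$ and the convergence claimed below are relative to this subsequence, which is all the applications require; one re-indexes and keeps the name $\{g_n\}$.)

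Now fix a compact subinterval $J_0 \subset J$ and $t_0 > 0$ such that the flow $\varphi^t$ of $X$ is defined on $J_0$ for $0 \le t \le t_0$. The set $\varphi^{[0,t_0]}(J_0)$ of all flow values $\varphi^t(x)$ with $x \in J_0$, $0 \le t \le t_0$, is a compact subset of the open interval $J$, hence lies at some distance $2\rho > 0$ from $\R \setminus J$; let $K \subset J \subset I$ be its closed $\rho$-neighbourhood, a compact interval. Put $m_n := [\lambda_n t_0]$, so $0 \le t_0 - m_n/\lambda_n < 1/\lambda_n$, and for $x \in J_0$ write $x_k := g_n^k(x)$, so that $x_0 = x$ and $x_{k+1} = x_k + \tfrac{1}{\lambda_n}\xi_n(x_k)$: this is precisely the Euler scheme for $\xi_n$. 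I would compare $x_k$ with the flow value $y_k := \varphi^{k/\lambda_n}(x)$. From the integral identity $\varphi^s(y) = y + \int_0^s \xi(\varphi^u(y))\,du$, together with $|\xi| \le A_2$ and the Lipschitz bound for $\xi$, one obtains the one-step estimate $\bigl|\,y_{k+1} - y_k - \tfrac{1}{\lambda_n}\xi(y_k)\,\bigr| \le C_2/\lambda_n^2$ with $C_2 = \tfrac{1}{2}A_2A_3$ --- note this uses only that $\xi$ is Lipschitz, not $C^1$. Subtracting and setting $e_k := |x_k - y_k|$, one gets, as long as $x_k, y_k \in K$,
\[
e_{k+1} \le \Bigl(1 + \tfrac{A_3}{\lambda_n}\Bigr)\,e_k + \tfrac{1}{\lambda_n}\,\|\xi_n - \xi\|_{C^0(K)} + \tfrac{C_2}{\lambda_n^2},
\]
and a discrete Gronwall summation over $k \le m_n \le \lambda_n t_0$ (starting from $e_0 = 0$) gives $e_{m_n} \le e^{A_3 t_0}\bigl(t_0\,\|\xi_n - \xi\|_{C^0(K)} + C_2 t_0/\lambda_n\bigr) =: \delta_n$, which tends to $0$ and does not depend on $x \in J_0$.

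The recursion above is valid only while the orbit stays in $K$ --- that is where $\xi_n$, $\xi$ and their bounds live --- so the two statements $x_k \in K$ and $e_k \le \delta_n$ must be proved together, by induction on $k$, for all $n$ large enough that $\delta_n \le \rho$: if $x_0, \dots, x_k \in K$, then the recursion holds at each step up to $k$ (its hypotheses being met, since $y_j \in \varphi^{[0,t_0]}(J_0) \subset K$ for $j \le m_n$), and summing it yields $e_{k+1} \le \delta_n \le \rho$; since $y_{k+1} \in \varphi^{[0,t_0]}(J_0)$ and $K$ is the closed $\rho$-neighbourhood of that set, this forces $x_{k+1} \in K \subset I$, so $x_{k+2} = g_n(x_{k+1})$ is again defined. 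Combining this with $|y_{m_n} - \varphi^{t_0}(x)| \le A_2\,(t_0 - m_n/\lambda_n) \le A_2/\lambda_n$ gives
\[
\sup_{x \in J_0}\,\bigl|\,g_n^{[\lambda_n t_0]}(x) - \varphi^{t_0}(x)\,\bigr| \le \delta_n + \tfrac{A_2}{\lambda_n} \longrightarrow 0,
\]
which is the assertion.

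I expect the main obstacle to be exactly this simultaneous induction: one must run the Gronwall estimate and the non-escape statement in tandem, keeping every constant uniform in $n$ and in $x \in J_0$. It is here that the relative compactness of $J$ is used, through the safety margin $\rho$ around $\varphi^{[0,t_0]}(J_0)$, which is what confines the Euler orbit to the set $I$ on which the $\xi_n$ are defined. The remaining ingredients --- the Ascoli--Arzela compactness and the $O(1/\lambda_n^2)$ one-step estimate --- are routine.
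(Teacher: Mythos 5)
Your proof is correct and follows essentially the same route as the paper: Ascoli--Arzel\`a applied to $\xi_n=\lambda_n(g_n-\id)$ produces the nowhere vanishing limit field $X$ (after passing to a subsequence, exactly as the paper also does), and the convergence of the iterates is an inductive Euler-scheme estimate in which the error bound and the confinement of the orbit $g_n^j(x)$ to a compact subinterval of $I$ are established in the same induction. The only cosmetic difference is that you compare $g_n^j$ directly with the flow of the limit field $X$, absorbing $\|\xi_n-\xi\|_{C^0}$ into the discrete Gronwall recursion, whereas the paper compares $g_n^j$ with the flow $\varphi_n^t$ of $X_n$ and then separately uses $\varphi_n^{t_0}\to\varphi^{t_0}$.
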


\begin{proof}
Our proof is based on the argument 
in the proof of Proposition 3.5 of \cite{Nak}. 
For each positive integer $n \ge 1$, 
we define a vector field $X_n$ on $I$ by 
\begin{eqnarray*}
X_n(x)=\lambda_n(g_n-\id)(x)\frac{\partial}{\partial x}.
\end{eqnarray*}
Then it follows from the assumption that 
the family $\{\lambda_n(g_n-\id)\ |\ n \ge 1\}$ 
is uniformly bounded and equicontinuous 
as a family of real valued functions on $I$. 
Let $J$ be an open and relatively compact subinterval of $I$. 
Then by applying Ascoli-Arzela's theorem and 
taking a subsequence if necessary, 
we may assume that the sequence of vector fields $\{X_n\}$ converges to 
a $C^0$-vector field $X$ on $J$ with respect to the $C^0$-topology. 
Moreover the inequality (1) implies that 
$\inf_{x \in J}||X(x)|| \ge A_1 > 0$ 
and hence $X$ is a nowhere zero vector field on $J$. 

Now we show that the vector field $X$ satisfies the desired condition. 
Let $J_0$ be a compact subinterval of $J$ and 
let $t_0 > 0$ be a positive real number such that 
the local flow $\varphi^t$ associated to $X$ is defined on $J_0$ 
for $0 \le t \le t_0$. 
Let $I'$ be an open and relatively compact subinterval of $I$ 
containing $J$. 
Then there exists a positive real number $\delta > 0$ such that 
the $\delta$-neighborhood of each point in $I'$ is contained in $I$. 
By taking a subsequence if necessary, 
we may assume that 
\begin{eqnarray}
\frac{1}{2}A_2\lambda_n^{-1} \{(1+A_3\lambda_n^{-1})^j-1\} < \delta 
\end{eqnarray}
for every positive integer $n$ 
and every integer $0 \le j \le [\lambda_n t_0]$.  
We denote by $\varphi_n^t$ the local flow associated to $X_n$. 
By taking a subsequence if necessary, 
we may assume that 
the point $\varphi_n^t(x)$ belongs to the interval $I'$ 
for every positive integer $n$, every real number $0 \le t \le t_0$ 
and every point $x$ in $J_0$. 
Now we prove the following claim: \\

\noindent
\textbf{Claim}\ 
For every positive integer $n$, every integer $0 \le j \le [\lambda_n t]$ 
and every point $x$ in $J_0$, we have 
\begin{eqnarray*}
|\varphi_n^{\lambda_n^{-1}j}(x)-g_n^j(x)| 
\le \frac{1}{2} A_2\lambda_n^{-1}\{(1+A_3\lambda_n^{-1})^j-1\} 
\end{eqnarray*}
and the point $g_n^j(x)$ belongs to the interval $I$. 

\begin{proof}
The proof is done by the induction on $j$. 
The claim obviously holds true for $j=0$. 
Now let us fix a positive integer $n$ arbitrarily
and assume that the result holds 
for an integer $j$ with $0 \le j \le [\lambda_n t]-1$. 
Let $x$ be an arbitrary point in $J_0$. 
Then it follows from the inequality (1) that 
\begin{eqnarray*}
& & \int_{0}^{\lambda_n^{-1}} |\varphi_n^{t+\lambda_n^{-1}j}(x)-g_n^j(x)| dt \\
& \le & \int_{0}^{\lambda_n^{-1}} 
        \{ |(\varphi_n^t-\id)(\varphi_n^{\lambda_n^{-1}j}(x))| 
        + |\varphi_n^{\lambda_n^{-1}j}(x)-g_n^j(x)| \} dt \\ 
& \le & \int_{0}^{\lambda_n^{-1}} \biggl\{ 
        \int_{0}^{t} \lambda_n|(g_n-\id)(\varphi_n^{s+\lambda_n^{-1}j}(x))| ds          + |\varphi_n^{\lambda_n^{-1}j}(x)-g_n^j(x)| \biggl\} dt \\
& \le & \int_{0}^{\lambda_n^{-1}} (A_2t 
        +|\varphi_n^{\lambda_n^{-1}j}(x)-g_n^j(x)|) dt \\ 
& \le & \frac{1}{2} A_2 \lambda_n^{-2}
        + \lambda_n^{-1} |\varphi_n^{\lambda_n^{-1}j}(x)-g_n^j(x)|. 
\end{eqnarray*}
Since the two points $\varphi_n^{\lambda_n^{-1}j}(x)$ and $g_n^j(x)$ 
belong to the interval $I$ by the assumption, 
the inequality (2) and the above inequality imply that
\begin{eqnarray*}
& & |((\varphi_n^{\lambda_n^{-1}}-\id)(\varphi_n^{\lambda_n^{-1}j}(x))
    - (g_n-\id)(g_n^j(x))| \\
& \le & \int_{0}^{\lambda_n^{-1}} 
        \lambda_n |(g_n-\id)(\varphi_n^{t+\lambda_n^{-1}j}(x)) 
        - (g_n-\id)(g_n^j(x))| dt \\
& \le & (\sup_{y \in I} \lambda_n|(g_n-\id)'(y)|)
        \int_{0}^{\lambda_n^{-1}} 
        |\varphi_n^{t+\lambda_n^{-1}j}(x)-g_n^j(x)| dt \\
& \le & \frac{1}{2}A_2A_3\lambda_n^{-2}
      + A_3\lambda_n^{-1} |\varphi_n^{\lambda_n^{-1}j}(x)-g_n^j(x)|. 
\end{eqnarray*}
Hence it follows from the assumption that
\begin{eqnarray*}
& & |\varphi_n^{\lambda_n^{-1}(j+1)}(x)-g_n^{j+1}(x)| \\
& \le & |((\varphi_n^{\lambda_n^{-1}}-\id)(\varphi_n^{\lambda_n^{-1}j}(x))
    - (g_n-\id)(g_n^j(x))| + |\varphi_n^{\lambda_n^{-1}j}(x)-g_n^j(x)| \\
& \le & \frac{1}{2} A_2A_3\lambda_n^{-2}
      + (1+A_3\lambda_n^{-1}) |\varphi_n^{\lambda_n^{-1}j}(x)-g_n^j(x)| \\
& \le & \frac{1}{2} A_2\lambda_n^{-1}\{(1+A_3\lambda_n^{-1})^{j+1}-1\}. 
\end{eqnarray*}
Thus we have proved the first assertion of the claim for $j+1$. 

Moreover, since the point $\varphi_n^{\lambda_n^{-1}(j+1)}(x)$ 
belongs to the interval $I'$ by the assumption, 
the inequality (3) and the property of $\delta$ imply that 
the point $g_n^{j+1}(x)$ belongs to the interval $I$. 
Thus we have proved the second assertion of the claim for $j+1$ 
and have finished the proof of the claim. 
\end{proof}

Now we return to the proof of the lemma. 
Since the sequence $\{\varphi_n^{t_0}\}$ uniformly converges to 
$\varphi^{t_0}$ on $J_0$ 
and the sequence $\{\lambda_n^{-1}[\lambda_nt_0]\}$ converges to $t_0$, 
the inequality (1) implies that 
the sequence $\{\varphi_n^{\lambda_n^{-1}[\lambda_nt_0]}\}$ 
uniformly converges to $\varphi^{t_0}$ on $J_0$. 
On the other hand, it follows from the above claim that 
\begin{eqnarray*}
|\varphi_n^{\lambda_n^{-1}[\lambda_nt_0]}(x)-g_n^{[\lambda_nt_0]}(x)|
& \le & \frac{1}{2} A_2\lambda_n^{-1} 
        \{(1+A_3\lambda_n^{-1})^{[\lambda_nt_0]} -1\} \\
& \le & \frac{1}{2} A_2\lambda_n^{-1}(e^{A_3t_0}-1) 
\end{eqnarray*}
for every positive integer $n$ and every point $x$ in $J_0$. 
This implies that the sequence $\{g_n^{[\lambda_nt_0]}\}$ 
also uniformly converges to $\varphi^{t_0}$ on $J_0$. 
Thus we have finished the proof of the lemma.
\end{proof}

Now we quote a result of I.Nakai (see Section 3 of \cite{Nak}). 
He showed that we can associate local vector fields to 
pseudogroups of holomorphic diffeomorphisms 
on open neighborhoods of the origin $0$ 
in the complex plane $\C$ fixing the origin. 
Precisely, his result is stated as follows:

\begin{prop}[I. Nakai]
Let $f$ and $g$ be holomorphic diffeomorphisms 
on a neighborhood of $0$ in $\C$. 
Assume that $f$ and $g$ have the following Taylor expansions: 
\begin{eqnarray*}
f(z)=z+az^i+\cdots , \ g(z)=z+bz^j+\cdots , \quad 
a, b \ne 0 , \ 1 \le i < j.
\end{eqnarray*}
For every positive integer $n \ge 1$, 
we put $\lambda_n = n^{\frac{j-i}{i}}$ and $g_n = f^{-n}gf^n$. 
We denote by $B_f$ the basin of $f$, that is, 
the set of the points $z$ in the domain of $f$ 
such that $f^n(z)$ converges to $0 \in \C$ as $n$ tends to the infinity. 
Then we have the following: 
\begin{enumerate}
    \item The sequence $\{\lambda_n(g_n - \id)\}$ 
          converges (locally uniformly) to a holomorphic vector field $X$ 
          on $B_f \setminus \{0\}$. 
    \item For every relatively compact subset $V$ of $B_f \setminus \{0\}$ 
          and for every positive real number $t_0 > 0$ such that 
          the local flow $\varphi^t$ associated to $X$ is defined on $V$ 
          for every $0 \leq t \leq t_0$, 
          the sequence $\{g_n^{[\lambda_nt_0]}\}$ 
          converges to the map $\varphi^{t_0}$ uniformly on $V$.  
\end{enumerate}
\end{prop}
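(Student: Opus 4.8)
The plan is to deduce both assertions from the classical description of the local dynamics of $f$ near its fixed point at $0$, invoking the estimate in the proof of Lemma 3.6 for the final step. Since $a\ne 0$, the origin is either a hyperbolic attracting fixed point of $f$ (when $i=1$ and $|f'(0)|<1$) or a fixed point tangent to the identity (when $i\ge 2$, so $f'(0)=1$); outside these two situations $B_f\setminus\{0\}$ is empty near $0$ and there is nothing to prove, and the hyperbolic case is treated just as below with Koenigs' linearization replacing the Fatou coordinate, so I concentrate on the essential parabolic case $i\ge 2$. Here the Leau--Fatou flower theorem applies: $f$ has finitely many attracting petals, the germ of $B_f$ at $0$ is their union together with $\{0\}$, and on each petal $P$ there is a Fatou coordinate $\Phi\colon P\to\{\,w:\mathrm{Re}\,w>R\,\}$, a biholomorphism conjugating $f$ to the unit translation $w\mapsto w+1$; one way to obtain $\Phi$ is to follow the change of variable $z\mapsto z^{1-i}$, in which $f$ becomes a map close to a translation. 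From the resulting identity $f^n(z)=\Phi^{-1}(\Phi(z)+n)$ one reads off, uniformly on every relatively compact subset of $B_f\setminus\{0\}$ (which, after applying a fixed power of $f$, lies inside the union of the petals), that $f^n(z)\to 0$ and $(f^n)'(z)\to 0$ at rates equal to fixed negative powers of $n$ governed by the order of tangency.

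Next I transport $g_n=f^{-n}gf^n$ to a Fatou coordinate. There $f^n$ is the translation $w\mapsto w+n$, and $g$ becomes a map of the form $w\mapsto w+\tilde g(w)$ defined for $\mathrm{Re}\,w$ large; one must first check that, because $g-\id$ vanishes to order $j$ at $0$, the germ $g$ carries a slightly smaller sub-petal back into the domain of $\Phi$, so that the conjugate $\tilde g$ is well defined there. Combining the vanishing order of $g-\id$ with the growth of $\Phi'$, a direct computation gives $\tilde g(w)\sim C_0\,w^{-\mu}$ as $\mathrm{Re}\,w\to\infty$ for some non-zero constant $C_0$ and positive exponent $\mu$, and Cauchy estimates on a slightly larger half-plane give the matching bound $\tilde g'(w)=O(|w|^{-\mu-1})$; the sequence $\lambda_n$ of the statement is precisely the one balancing these rates, so that $\lambda_n=n^{\mu}$. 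Hence in the Fatou coordinate $g_n$ is the map $w\mapsto w+\tilde g(w+n)$, and $\lambda_n\,\tilde g(w+n)=n^{\mu}C_0(w+n)^{-\mu}(1+o(1))\to C_0$ locally uniformly in $w$. Translating back to the $z$-coordinate, $\lambda_n(g_n-\id)$ converges locally uniformly on $B_f\setminus\{0\}$ to $C_0/\Phi'$, that is, to the holomorphic vector field $X=(C_0/\Phi')\,\partial/\partial z$; holomorphy is automatic since $X$ is a locally uniform limit of the holomorphic maps $\lambda_n(g_n-\id)$, and $X$ is nowhere vanishing on each petal because $\Phi'$ is. This establishes assertion (1). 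Moreover, if $V\subset B_f\setminus\{0\}$ is relatively compact then $\Phi(\overline V)$ is compact in the half-plane, and the convergence above yields, for all large $n$, uniform bounds $A_1\le\inf_V\lambda_n|(g_n-\id)|$, $\sup_V\lambda_n|(g_n-\id)|\le A_2$ and $\sup_V\lambda_n|(g_n-\id)'|\le A_3$, which are exactly the inequalities needed to run the argument of Lemma 3.6 on $V$.

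For assertion (2), fix $V$ and $t_0$ as in the statement. In the Fatou coordinate the iterates $g_n^{\ell}$ with $0\le\ell\le[\lambda_n t_0]$ stay in a bounded neighbourhood of $\overline V$, since the accumulated displacement is at most $[\lambda_n t_0]\sup|\tilde g(\cdot+n)|$, which is comparable to $\lambda_n t_0\cdot n^{-\mu}=t_0C_0$; hence $\mathrm{Re}(g_n^{\ell}(w)+n)\to\infty$ uniformly in $\ell$ and $w$, each of the $[\lambda_n t_0]$ increments equals $C_0 n^{-\mu}(1+o(1))$ uniformly, and summing them gives $g_n^{[\lambda_n t_0]}(w)\to w+t_0C_0$, which in the original coordinate is $\varphi^{t_0}(w)$. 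The same conclusion also follows formally by feeding the bounds $A_1,A_2,A_3$ into the Euler-polygon estimate in the proof of Lemma 3.6, which does not care whether the variable runs over a real interval or a planar domain. Either way, $g_n^{[\lambda_n t_0]}$ converges to $\varphi^{t_0}$ uniformly on $V$.

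The step I expect to be the main obstacle is the parabolic normal-form analysis underlying the second paragraph: one needs the Fatou coordinate not merely as an abstract conjugacy but with enough quantitative control to pin down the decay exponent $\mu$ and the leading constant $C_0$ of $\tilde g$, to verify that $g$ preserves a sub-petal on which the conjugacy remains valid, and to obtain all of this uniformly on compact subsets of $B_f\setminus\{0\}$ together with the first-order bound on $\lambda_n(g_n-\id)'$. Once that is done, assertion (1) is merely a rescaling and assertion (2) is the Euler-method estimate already packaged in Lemma 3.6. It bears stressing that (2) does not follow formally from (1): convergence of the rescaled vector fields alone says nothing about the very long compositions $g_n^{[\lambda_n t_0]}$, and it is the uniform control on the derivatives $\lambda_n(g_n-\id)'$ that makes the passage to the flow legitimate.
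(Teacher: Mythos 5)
The first thing to say is that the paper does not prove this proposition at all: it is quoted verbatim from Nakai's work (``Now we quote a result of I.~Nakai (see Section 3 of \cite{Nak})''), so there is no in-paper proof to compare yours against. What you have written is a reconstruction of Nakai's argument, and in outline it is the standard and essentially correct one: pass to a Fatou coordinate $\Phi$ on each attracting petal, where $f$ becomes $w\mapsto w+1$ and $g$ becomes $w\mapsto w+\tilde g(w)$ with $\tilde g(w)\sim C_0w^{-\mu}$, read off assertion (i) by rescaling, and get assertion (ii) either by summing the $[\lambda_nt_0]$ increments directly in the $w$-coordinate or by the Euler-polygon estimate. (The latter is Lemma 3.4 of the paper, not ``Lemma 3.6''; note that the paper's proof of Lemma 3.4 is itself explicitly modeled on Nakai's argument, so part (ii) of your sketch is very much in the spirit of the original.) You are also right to stress that (ii) is not a formal consequence of (i) and that the derivative bound $\lambda_n|(g_n-\id)'|\le A_3$ --- which here comes for free from Cauchy estimates --- is what makes the passage to the flow legitimate.

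There is, however, one concrete assertion in your sketch that is false as written and that actually doing the computation would expose. With the normalization of the statement, $f(z)=z+az^i+\cdots$ with $i\ge 2$, the orbit satisfies $f^n(z)\sim Cn^{-1/(i-1)}$ and $(f^n)'(z)\sim C'n^{-i/(i-1)}$, so $g_n-\id\sim C''n^{-(j-i)/(i-1)}$; the balancing exponent is $\mu=(j-i)/(i-1)$, not $(j-i)/i$. With $\lambda_n=n^{(j-i)/i}$ as printed one gets $\lambda_n(g_n-\id)\to 0$, i.e.\ $X\equiv 0$, which makes the proposition vacuously true but useless for everything downstream (Proposition 3.7 needs $X$ nowhere zero). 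The printed formula is the right one for the convention $f(z)=z+az^{i+1}+\cdots$, $g(z)=z+bz^{j+1}+\cdots$ --- which is Nakai's convention and is exactly the normalization under which the result is invoked in Case~2 of Proposition 3.7 --- so this is best regarded as an indexing slip in the statement; but your claim that ``the sequence $\lambda_n$ of the statement is precisely the one balancing these rates'' papers over it, and a proof that never pins down $\mu$ cannot be considered complete. Two smaller points: the case $i=1$ is not ``treated just as below,'' since there the decay of $g_n-\id$ is exponential and the limit field with any polynomial rescaling is zero (and if $f'(0)=1+a$ is a nontrivial root of unity the basin can still be nonempty and the petal analysis changes); and the quantitative control of the Fatou coordinate that you defer --- the expansion $\Phi(z)=-\tfrac{1}{(i-1)az^{i-1}}+\beta\log z+O(1)$ and the verification that $g$ maps a sub-petal into the domain of $\Phi$ --- is standard but is genuinely where the work lies, as you say.
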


When we embed the real line $\R$ in the complex plane $\C$, 
every real analytic diffeomorphism defined on a neighborhood of $0$ in $\R$ 
has a holomorphic extension to a certain neighborhood of $0$ in $\C$. 
Therefore we obtain the following corollary:

\begin{coro}
Let $f$ and $g$ be real analytic diffeomorphisms 
on a neighborhood of $0$ in $\R$. 
Assume that $f$ and $g$ have the following Taylor expansions: 
\begin{eqnarray*}
f(x)=x+ax^i+\cdots , \ g(x)=x+bx^j+\cdots , \quad 
a, b \ne 0 , \ 1 \le i < j.
\end{eqnarray*}
For every positive integer $n \ge 1$, 
we put $\lambda_n = n^{\frac{j-i}{i}}$ and $g_n = f^{-n}gf^n$. 
We denote by $B_f$ the basin of $f$, that is, 
the set of the points $x$ in the domain of $f$ 
such that $f^n(x)$ converges to $0 \in \R$ as $n$ tends to the infinity. 
Then we have the following: 
\begin{enumerate}
    \item The sequence $\{\lambda_n(g_n - \id)\}$ 
          converges (locally uniformly) to a real analytic vector field $X$ 
          on $B_f \setminus \{0\}$. 
    \item For every relatively compact subset $I_0$ of $B_f \setminus \{0\}$ 
          and for every positive real number $t_0 > 0$ such that 
          the local flow $\varphi^t$ associated to $X$ is defined on $I_0$ 
          for every $0 \leq t \leq t_0$, 
          the sequence $\{g_n^{[\lambda_nt_0]}\}$ 
          converges to the map $\varphi^{t_0}$ uniformly on $I_0$.  
\end{enumerate}
\end{coro}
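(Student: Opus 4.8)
The plan is to deduce the statement directly from Nakai's Proposition 3.7 by passing to holomorphic extensions. Since $f$ and $g$ are real analytic diffeomorphisms defined on a neighborhood of $0$ in $\R$, they extend to holomorphic maps $\tilde{f}$ and $\tilde{g}$ on some open neighborhood $U$ of $0$ in $\C$; after shrinking $U$ we may take the domain of $f$ (resp.\ $g$) to be $U \cap \R$, and since $f$ is a diffeomorphism we have $f'(0) \ne 0$, so by the holomorphic inverse function theorem $\tilde{f}$ and $\tilde{g}$ are biholomorphisms on a neighborhood of $0$. Because $f$ and $g$ have real Taylor coefficients, $\tilde{f}$ and $\tilde{g}$ satisfy the hypotheses of Proposition 3.7 with the same integers $i, j$ and the same leading coefficients $a, b$. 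Putting $\lambda_n = n^{\frac{j-i}{i}}$ and $\tilde{g}_n = \tilde{f}^{-n}\tilde{g}\tilde{f}^n$, Proposition 3.7 produces a holomorphic vector field $\tilde{X}$ on $B_{\tilde{f}} \setminus \{0\}$ to which $\{\lambda_n(\tilde{g}_n - \id)\}$ converges locally uniformly, together with the corresponding convergence statement for $\{\tilde{g}_n^{[\lambda_n t_0]}\}$.

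Next I would check compatibility with complex conjugation and with restriction to $\R$. Since $\tilde{f}$ and $\tilde{g}$ have real coefficients they commute with $z \mapsto \bar{z}$, hence so do $\tilde{g}_n$ and $\lambda_n(\tilde{g}_n - \id)$, and therefore the limit $\tilde{X}$ is invariant under conjugation; in particular it is real-valued on $(B_{\tilde{f}} \setminus \{0\}) \cap \R$, where it restricts to a real analytic vector field. On the other hand, for $x \in U \cap \R$ one has $\tilde{f}^n(x) = f^n(x)$ for as long as the iterates remain in $U \cap \R$, and an easy induction shows that a point of $B_{\tilde{f}} \cap \R$ has its whole forward $\tilde{f}$-orbit real; hence $B_f = B_{\tilde{f}} \cap \R$. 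Thus $\tilde{X}$ restricts to a real analytic vector field $X$ on $B_f \setminus \{0\}$, and since $g_n = \tilde{g}_n|_{\R}$ the local uniform convergence of $\{\lambda_n(\tilde{g}_n - \id)\}$ on $B_{\tilde{f}} \setminus \{0\}$ restricts to local uniform convergence of $\{\lambda_n(g_n - \id)\}$ to $X$ on $B_f \setminus \{0\}$, which is assertion (1).

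For assertion (2), let $I_0$ be a relatively compact subset of $B_f \setminus \{0\}$ and let $t_0 > 0$ be such that the flow $\varphi^t$ of $X$ is defined on $I_0$ for $0 \le t \le t_0$. Since $\R$ is closed in $\C$, $I_0$ is also relatively compact in $B_{\tilde{f}} \setminus \{0\}$, and because $X = \tilde{X}|_{\R}$ is real the curve $t \mapsto \varphi^t(x)$ solves the complex ODE as well, so by uniqueness the real-time flow $\tilde{\varphi}^t$ of $\tilde{X}$ is defined on $I_0$ for $0 \le t \le t_0$ and agrees there with $\varphi^t$. Applying Proposition 3.7(2) with $V = I_0$ gives that $\{\tilde{g}_n^{[\lambda_n t_0]}\}$ converges to $\tilde{\varphi}^{t_0}$ uniformly on $I_0$; restricting to $\R$ and using $g_n = \tilde{g}_n|_{\R}$ and $\tilde{\varphi}^{t_0}|_{I_0} = \varphi^{t_0}|_{I_0}$ yields the uniform convergence of $\{g_n^{[\lambda_n t_0]}\}$ to $\varphi^{t_0}$ on $I_0$. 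The only points needing genuine care are the conjugation-symmetry argument showing that $\tilde{X}$ and its flow are real, and the verification that the condition ``defined for $0 \le t \le t_0$'' transfers between the real and complex flows; neither is a serious obstacle, so the corollary is essentially a translation of Proposition 3.7 into the real analytic setting.
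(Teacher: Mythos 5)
Your argument is correct and is exactly the paper's intended proof: the paper disposes of this corollary with the single remark that real analytic diffeomorphisms extend holomorphically near $0$, and you have simply filled in the details of that reduction (conjugation symmetry of the extensions, $B_f = B_{\tilde f}\cap\R$, and the agreement of the real and complex flows), all of which check out. Note only that the result you invoke is Nakai's Proposition~3.5 in the paper's numbering, not Proposition~3.7.
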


Using this corollary, we show the following proposition:

\begin{prop}
Let $\Gamma$ be a subgroup of $\Diff^{\omega}_+(\S^1)$ 
and let $p$ be a point in the circle. 
Assume that the stabilizer $\Stab_p(\Gamma)$ of the point $p$ in $\Gamma$ 
is not cyclic. 
Then we have the following:
\begin{enumerate}
    \item $\Gamma$ is locally nondiscrete with respect to the $C^1$-topology 
          and 
    \item there exists a local $C^0$-vector field defined on an open interval 
          which is in the $C^0$-closure of $\Gamma$. 
\end{enumerate}
\end{prop}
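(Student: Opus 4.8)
The plan is to produce two germs at $p$ with different orders of tangency to the identity and then invoke Corollary 3.8. First I would pass to the stabilizer $G = \Stab_p(\S^1) \cap \Stab_p(\Gamma)$ acting by real analytic germs fixing $p$; choosing a real analytic chart sending $p$ to $0 \in \R$, every element $g \in G$ has a Taylor expansion $g(x) = x + a_g x^{i_g} + \cdots$ with $a_g \neq 0$ (unless $g = \id$), where $i_g \geq 1$ is the order of contact of $g$ with the identity. The key point is that $\Stab_p(\Gamma)$ is not cyclic, so I want to extract from it two elements $f$ and $g$ with $i_f = i < i_g = j$, i.e. genuinely different orders of tangency. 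This is where I expect the main work to lie: one must rule out the possibility that every element of the stabilizer has the \emph{same} order of tangency $i$. If that were the case, the map $g \mapsto a_g$ (leading coefficient) would be a group homomorphism-like invariant — more precisely, for two germs both tangent to order $i$, the composition $g \circ h$ has leading term $(a_g + a_h)x^i + \cdots$ when $i > 1$, and when $i = 1$ one looks at the derivative at $0$, which gives a homomorphism $G \to \R_{>0}$ whose kernel again consists of germs tangent to higher order. In either case, if no two elements had different tangency orders one could build an injective homomorphism from a quotient of $G$ into $(\R, +)$, and combined with the classical fact (Kopell's lemma, or the abelian-ness forced on commuting germs via Szekeres / Hölder-type arguments) this would force $\Stab_p(\Gamma)$ to be infinite cyclic, contradicting the hypothesis. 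I would formalize exactly this dichotomy.

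Once I have $f$ and $g$ with $f(x) = x + ax^i + \cdots$, $g(x) = x + bx^j + \cdots$, $a,b \neq 0$, $1 \leq i < j$, I apply Corollary 3.8 verbatim: set $\lambda_n = n^{(j-i)/i}$ and $g_n = f^{-n} g f^n$. The basin $B_f$ of $f$ is nonempty — it contains a one-sided neighborhood of $0$ (the side on which $f$ is attracting, determined by the sign of $a$ and the parity of $i$), so $B_f \setminus \{0\}$ contains a nonempty open interval $J$. Corollary 3.8(i) gives a real analytic vector field $X$ on $B_f \setminus \{0\}$ that is the locally uniform limit of $\lambda_n(g_n - \id)$, and 3.8(ii) says precisely that $X$ is in the $C^0$-closure of $\Gamma$ relative to any open subinterval of $B_f \setminus \{0\}$ in the sense of Definition 3.4 — indeed the maps $g_n^{[\lambda_n t_0]}$ are restrictions of elements of $\Gamma$ (compositions of $f$, $g$, $f^{-1}$) and they converge uniformly on compact subintervals $J_0$ to $\varphi^{t_0}$. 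That settles assertion (ii), modulo checking that $X$ does not vanish identically — but $X$ is real analytic and $\lambda_n(g_n - \id)$ has a nonzero limit germ at $0$ (its leading term is, by Nakai's computation, a nonzero multiple of $x^j$ or a nearby power coming from the conjugation), so $X \not\equiv 0$ on the interval, and we may shrink $J$ to where $X$ is nowhere zero.

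For assertion (i), I note that the flow $\varphi^t$ of $X$ through any interior point is defined for all small $t > 0$, and the maps $\varphi^{t}$ for $t \to 0^+$ converge in the $C^1$-topology on a compact subinterval to the identity — since $X$ is $C^1$ (in fact real analytic) its time-$t$ flow is $C^1$-close to $\id$ for small $t$. Better yet, a cleaner route: from $\lambda_n(g_n - \id) \to X$ locally uniformly together with an analogous $C^1$ bound, one gets $g_n \to \id$ in the $C^1$-topology on a fixed compact subinterval of $B_f \setminus \{0\}$ directly, because $g_n - \id = \lambda_n^{-1}(\lambda_n(g_n - \id))$ and $\lambda_n \to \infty$; the derivative estimate needed is exactly of the form supplied in the hypotheses (1)–(2) of Lemma 3.5, and Nakai's Proposition 3.7 provides the requisite uniform bounds on $\lambda_n(g_n - \id)$ and its derivative on relatively compact subsets of $B_f \setminus \{0\}$. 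Since each $g_n = f^{-n}gf^n \neq \id$ (as $g \neq \id$ and $f$ is a diffeomorphism), we have a sequence in $\Gamma \setminus \{\id\}$ converging to $\id$ on an open interval in the $C^1$-topology, which is the definition of local nondiscreteness with respect to the $C^1$-topology. The main obstacle, to reiterate, is the group-theoretic step: showing that a non-cyclic group of real analytic germs fixing a point must contain two elements with distinct tangency orders to the identity; everything after that is an application of the already-established Corollary 3.8 and Proposition 3.7.
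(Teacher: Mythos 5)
Your reduction has a genuine gap at precisely the step you flag as the main work: the claim that a non-cyclic stabilizer must contain two elements with \emph{distinct} orders of tangency to the identity at $p$. Your homomorphism argument only shows that if every nontrivial element of $\Stab_p(\Gamma)$ has the same tangency order $i$, then the leading-coefficient map (or the derivative at $p$ when $i=1$) embeds $\Stab_p(\Gamma)$ into $(\R,+)$ (resp.\ $(\R_{>0},\times)$), hence the stabilizer is abelian and torsion-free. That does not make it cyclic: a subgroup of $\R$ can be free abelian of rank two, and then it is dense. This case genuinely occurs --- take two hyperbolic elements of $\PSL(2,\R)$ with the same pair of fixed points whose multipliers at $p$ have logarithms with irrational ratio; every nontrivial element of the resulting stabilizer has tangency order $1$, the group is not cyclic, and no pair $f,g$ with $i<j$ exists. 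Neither Kopell's lemma nor Szekeres' theorem rescues the implication: Szekeres only says that the $C^1$-centralizer of $f$ on a component of the complement of $\Fix(f)$ is a flow $\{\varphi^t\}\cong\R$, into which an abelian stabilizer embeds as an arbitrary (possibly dense, noncyclic) subgroup.

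The paper therefore splits the proof differently. If $\Stab_p(\Gamma)$ is abelian and noncyclic, it embeds as a noncyclic, hence dense, subgroup of the Szekeres flow $\{\varphi^t\}$ of one of its nontrivial elements $f$ on a component $]a,b[$ of the complement of $\Fix(f)$; density immediately yields a sequence in $\Gamma\setminus\{\id\}$ converging to the identity in the $C^1$-topology on $]a,b[$ and shows that the Szekeres vector field itself is in the closure of $\Gamma$. If $\Stab_p(\Gamma)$ is non-abelian, your homomorphism argument, run in the contrapositive, does produce $f,g$ with distinct tangency orders, and from there your use of Nakai's corollary is essentially the paper's argument; note only that the paper treats the sub-case $f'(p)\neq 1$ by hand (Koenigs linearization together with the quantitative Lemma 3.4 and the exponential normalization $\lambda_n=\lambda^{-jn}$) rather than through Nakai's statement, whose polynomial normalization $n^{(j-i)/i}$ is adapted to $f$ tangent to the identity. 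You need to add the abelian branch to have a complete proof.
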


\begin{proof}
The proof is divided into two cases 
in accordance with the stabilizer $\Stab_p(\Gamma)$  
being abelian. \\

\noindent
Case\ 1. The stabilizer $\Stab_p(\Gamma)$ of $p$ in $\Gamma$ is abelian. 

Let $f$ be an element of $\Stab_p(\Gamma)$ 
which is different from the identity. 
We claim that 
every element $g$ of $\Stab_p(\Gamma)$ fixes every point of $\Fix(f)$. 
Indeed, $g$ commutes with $f$ 
and hence it preserves $\Fix(f)$. 
Moreover, it follows from the real analyticity of $f$ that 
$\Fix(f)$ is a finite set. 
Since $g$ fixes the point $p$ in $\Fix(f)$, 
this implies that $g$ fixes every point of $\Fix(f)$. 

Let $I=]a, b[$ be a connected component of the complement of $\Fix(f)$. 
Then it follows from a result of G. Seekers that 
there exists a $C^1$-vector field $X$ on $[a, b[$ 
with the following properties: 
\begin{itemize}
     \item $X$ is nowhere zero on $I$,
     \item the local flow $\varphi^t$ associated to $X$ verifies 
           $\varphi^1 = f|_{[a,b[}$ and 
     \item the centralizer of $f|_{[a,b[}$ in $\Diff^1_+([a,b[)$ is 
           equal to $\{\varphi^t\}$
\end{itemize}
(see \cite{Sz}. See also Theorem 2.2 of \cite{Ser}). 
Since $\Stab_p(\Gamma)$ is abelian and noncyclic, 
the restrictions of its elements to $I$ form a dense subgroup of 
$\{\varphi^t\}$. 
This implies that there exists a sequence of elements of 
$\Stab_p(\Gamma) \setminus \{\id\}$ 
which converges to the identity on $I$ with respect to the $C^1$-topology. 
Moreover it follows that the restriction of the vector field $X$ to $I$ 
is in the $C^1$-closure of $\Gamma$ with respect to $I$. 
Thus we have finished the proof in this case. \\

\noindent
Case\ 2. The stabilizer $\Stab_p(\Gamma)$ of $p$ in $\Gamma$ is not abelian.

In this case, there exist elements $f$ and $g$ of $\Stab_p(\Gamma)$ 
which have the following Taylor expansions in a local coordinate $x$ 
with $p=0$: 
\begin{eqnarray*}
f(x)=x+ax^{i+1}+\cdots , \ g(x)=x+bx^{j+1}+\cdots , \ 
a, b \ne 0 , \ 0 \le i < j.
\end{eqnarray*}

If $i \ge 1$, then the claim of the proposition results from Corollary 3.6. 
Hence it suffices to consider the case where $i=0$. 
We put $\lambda=f'(0)(=1+a)$. 
Replacing $f$ by its inverse if necessary, 
we may assume that $0< \lambda < 1$. 
Then it follows from a theorem of G. Koenigs (see Theorem 2.1 of \cite{C-G}) 
that there exists a local real analytic coordinate $x$ 
on an open neighborhood $I_1$ of $p$ 
such that $p=0$ and $f$ is written in the form $f(x)=\lambda x$ on $I_1$. 
Via the local coordinate $x$, we regard each element of $\Stab_p(\Gamma)$ 
as a local orientation-preserving real analytic diffeomorphism on $I_1$ 
fixing the point 0. 

Replacing $g$ by its inverse if necessary, we may assume that $b < 0$. 
Then there exists a positive real number $\delta_1 >0$ such that 
the closed interval $[0, \delta_1]$ is contained in $I_1$ and 
$g^{(j+1)}(y) < 0$ for every $y$ in $[0, \delta_1]$. 
We denote by $I_1'$ the closed interval $[0, \delta_1]$. 
We put $M_1=\sup_{y \in I_1'} |g^{(j+1)}(y)|$ and 
$M_2=\inf_{y \in I_1'} |g^{(j+1)}(y)|$. 
Then we have $0 < M_2 \le M_1$. 
We take a positive real number $\delta_2$ in $]0, \delta_1[$ 
and denote by $I$ the open interval $]\delta_2, \delta_1[$. 

For each positive integer $n \ge 1$, 
we define an element $g_n$ of $\Stab_p(\Gamma)$ by $g_n=f^{-n}gf^n$. 
Now we prove the following lemma: 

\begin{lemm}
For every positive integer $n \ge 1$ and every point $x$ in $I$, we have 
\begin{eqnarray}
\frac{\delta_2^{j+1}}{(j+1)!}M_2 
\le \lambda^{-jn} |(g_n-\id)(x)|
& \le & \frac{\delta_1^{j+1}}{(j+1)!}M_1 
\ \textrm{and} \\
\lambda^{-jn}|(g_n-\id)'(x)| 
& \le & \frac{\delta_1^j}{j!}M_1.
\end{eqnarray}
\end{lemm}

\begin{proof}
We fix a point $x$ in $I$ arbitrarily. 
First we claim that 
\begin{eqnarray}
g_n(x)=\lambda^{-n}g(\lambda^n x). 
\end{eqnarray}
Indeed, since $g^{(j+1)}(y) < 0$ for every point $y$ in $I$, 
it follows from the definition of $j$ and Taylor's theorem that 
$0 < g(f^n(x)) < f^n(x) < \lambda^n \delta_1$. 
Hence the point $g(f^n(x))$ belongs to the open interval 
$]0, \lambda^n\delta_1[$. 
Since $f^{-n}(z)=\lambda^{-n}z$ 
for every point $z$ in $]0, \lambda^n\delta_1[$, 
we obtain the equality (6). 

Next we prove the inequality (4). 
The equality (6) implies that 
\begin{eqnarray*}
|(g_n-\id)(x)|=\lambda^{-n}|(g-\id)(\lambda^n x)|.
\end{eqnarray*}
Moreover, it follows from Taylor's theorem that 
there exists a point $y$ in $I_1'$ such that 
\begin{eqnarray*}
|(g-\id)(\lambda^n x)|
=\frac{(\lambda^nx)^{j+1}}{(j+1)!} |g^{(j+1)}(\lambda^n y)|.
\end{eqnarray*}
These two equalities imply the inequality (4). 

Finally we prove the inequality (5). 
It follows from the equality (6) that 
\begin{eqnarray*}
|(g_n-\id)'(x)|=|(g-\id)'(\lambda^n x)|.
\end{eqnarray*}
Since $g'(0)=1$ by the assumption, it follows from Taylor's theorem that 
there exists a point $y'$ in $I_1'$ such that 
\begin{eqnarray*}
|(g-\id)'(\lambda^n x)|
=\frac{(\lambda^n x)^j}{j!} |g^{(j+1)}(\lambda^n y')|.
\end{eqnarray*}
This equality implies the inequality (5) 
and we have finished the proof of the lemma.
\end{proof}

We return to the proof of Proposition 3.7 in Case 2. 
It follows from Lemma 3.8 that 
the sequence $\{g_n\}$ converges to the identity on $I$.
Moreover the sequence $\{g_n\}$ satisfies the assumption of Lemma 3.4 
(we put $\lambda_n=\lambda^{-jn}$). 
Therefore Lemma 3.4 implies the existence of a desired local vector field. 
Thus we have finished the proof in Case 2 and 
the proof of the proposition has been completed.
\end{proof}

The rest of this section is devoted to proving the following proposition: 

\begin{prop}
Let $\Gamma$ be a subgroup of $\Diff^{\omega}_+(\S^1)$ 
which is nondiscrete with respect to the $C^1$-topology. 
Assume that $\Gamma$ preserves no probability measure 
on the circle. 
Then there exist a nowhere zero $C^0$-vector field 
which is defined on an open interval in the circle 
and is in the $C^0$-closure of $\Gamma$.
\end{prop}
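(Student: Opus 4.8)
The plan is to reduce to the situation covered by Proposition 3.7 by finding a point whose stabilizer in $\Gamma$ is noncyclic, and if that fails, to extract a nonzero local vector field directly from a nondiscreteness sequence using Lemma 3.4. First I would invoke Proposition 2.3 to split into cases according to the dynamics of $\Gamma$. Since $\Gamma$ preserves no probability measure, $\Gamma$ cannot be finite and, by Proposition 2.4 (together with the fact that a group with an exceptional minimal set or a finite orbit would preserve a probability measure in the relevant situations — more precisely, a finite orbit gives an invariant measure, so that case is excluded), $\Gamma$ is either minimal or has an exceptional minimal set. But Proposition 3.2 shows that a subgroup that is locally nondiscrete with respect to the $C^0$-topology has no exceptional minimal set; I would first check that $C^1$-nondiscreteness of $\Gamma$ forces $C^0$-local-nondiscreteness — a $C^1$-convergent sequence of nontrivial elements tending to $\id$ converges $C^0$ on any interval, so $\Gamma$ is $C^0$-locally nondiscrete and hence has no exceptional minimal set. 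Therefore $\Gamma$ is minimal (and preserves no probability measure), so Corollary 2.7 applies and furnishes, for any suitably small $I_0$, an element $h$ with $\emptyset\ne\Fix(h)\subset\bigcup_j\Int(\theta^j(I_0))$ and the stated trapping property.

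Next I would use the nondiscreteness sequence $\{g_n\}\subset\Gamma\setminus\{\id\}$ with $g_n\to\id$ in $C^1$. Fix an interval $J$ on which the $C^1$-convergence holds. Two subcases arise. If, after normalizing, the functions $g_n-\id$ do not all vanish to the same order at a common point, or more precisely if the rescaled differences $\lambda_n(g_n-\id)$ (for an appropriate choice of $\lambda_n\to\infty$ extracted from the decay rate of $\|g_n-\id\|_{C^1(J)}$) stay bounded away from $0$ and bounded above with bounded derivative on some subinterval, then hypotheses (1)–(2) of Lemma 3.4 are met and Lemma 3.4 directly produces a nowhere-zero $C^0$ vector field on a subinterval that is in the $C^0$-closure of $\Gamma$, and we are done. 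The delicate point is arranging the normalization: one takes $\lambda_n = \|(g_n-\id)\|^{-1}$ in an appropriate norm (say the $C^0$-norm on a fixed compact subinterval) so that the sup in (1) is exactly $1$; the lower bound $A_1>0$ is the issue, since a priori $g_n-\id$ may be very small on part of $J$ even after this normalization. To handle this I would pass to a subinterval where $|g_n-\id|$ is comparable to its sup — such a subinterval exists with controlled derivatives because of the $C^1$-bound — possibly shrinking and translating along the circle using minimality, so that both (1) and (2) hold there with uniform constants.

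If instead no such rescaling works — intuitively, the $g_n$ are "too degenerate", vanishing at common points — then I would exploit the element $h$ from Corollary 2.7 together with the minimality to manufacture a noncyclic stabilizer. The idea is that $\Fix(h)$ is nonempty and, by the trapping property of $h$, conjugates $g_n h g_n^{-1}$ of $h$ have fixed sets $g_n(\Fix(h))$ that eventually land near $\Fix(h)$; since $g_n\to\id$ on $J$ and $\Fix(h)$ can be arranged (by choosing $I_0$ small and using minimality to move $\Fix(h)$ into $J$) to meet $J$, for large $n$ the conjugate $h_n:=g_nhg_n^{-1}$ shares a fixed point $p$ with $h$ but is distinct from $h$ and does not commute with $h$ in general, giving two independent elements of $\Stab_p(\Gamma)$ that generate a noncyclic subgroup. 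Then Proposition 3.7 applies at $p$ and yields the desired local $C^0$ vector field in the $C^0$-closure of $\Gamma$. The main obstacle I anticipate is precisely this last dichotomy: showing that failure of the Lemma 3.4 normalization genuinely forces a noncyclic stabilizer, i.e. controlling the common zero structure of the $g_n$ and using real analyticity (each $g_n-\id$ has finitely many zeros) plus the $C^1$-smallness to pin down a single point fixed by two non-commuting elements. Once both branches are closed off, the proposition follows.
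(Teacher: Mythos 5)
Your reduction to the minimal case is fine and matches the paper (a finite orbit would give an invariant measure; $C^1$-nondiscreteness gives $C^0$-local nondiscreteness, so Proposition 3.2 rules out an exceptional minimal set). The reduction ``if some stabilizer is noncyclic, apply Proposition 3.7'' is also the paper's first step. But the heart of the proof is the case where the relevant stabilizer \emph{is} cyclic, and there your argument has a genuine gap at exactly the point you flag as delicate. Normalizing by $\lambda_n=\|g_n-\id\|_{C^0}^{-1}$ on a fixed interval $J$ gives the upper bounds in Lemma 3.4 but not the lower bound $A_1>0$: the set where $|g_n-\id|$ is comparable to its sup can shrink as $n\to\infty$, and ``passing to a subinterval where it is comparable'' does not produce a single subinterval that works for all $n$. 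Your fallback branch is also not established: $g_nhg_n^{-1}$ has fixed set $g_n(\Fix(h))$, which is merely close to $\Fix(h)$, so there is no reason for the two elements to share a fixed point, hence no noncyclic stabilizer is produced. Neither branch of your dichotomy closes.

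The missing idea is a renormalization by a hyperbolic germ. The paper first invokes a Sacksteder-type theorem of Ghys (Theorem 1.2.7 of [E-T]) for minimal real-analytic groups preserving no probability measure to produce $f\in\Gamma$ and a point $p$ with $f(p)=p$ and $f'(p)=\lambda<1$, then linearizes $f$ near $p$ by Koenigs' theorem so that $f(x)=\lambda x$. Given the nondiscreteness sequence $h_n\to\id$ in $C^1$ near $p$ with $h_n(p)\ne p$ (possible since $\Stab_p(\Gamma)$ is cyclic), one chooses for each $n$ a scale $k_n$ --- essentially the smallest $k$ with $(n+1)|(h_n-\id)(\lambda^{k})|>\lambda^{k}(1-\lambda)$ --- and sets $g_n=f^{-k_n+1}h_nf^{k_n-1}$. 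The linearization converts the two-sided control of $|h_n-\id|$ on the shrinking interval around $\lambda^{k_n}$ into uniform two-sided bounds for $n|(g_n-\id)|$ and $n|(g_n-\id)'|$ on the \emph{fixed} interval $[\max\{0,2\lambda-1\},\lambda]$ (Lemmas 3.10--3.13), which is precisely what Lemma 3.4 needs with $\lambda_n=n$. Without this zooming device (or an equivalent one), the lower bound $A_1$ cannot be secured, so the proposal as written does not prove the proposition. (Also note that Corollary 2.7 plays no role here; it is used later, in the proof of Proposition 4.1.)
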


\begin{proof}
Since $\Gamma$ has no finite orbit by the assumption, 
it is minimal by Proposition 3.2. 
Moreover $\Gamma$ contains a nontrivial element which has a fixed point. 
Indeed if there were no such element, it follows from H\"older's theorem 
that $\Gamma$ would be abelian and 
would preserve a probability measure on the circle. 
Then by a result of E. Ghys there exists an element $f$ of $\Gamma$ 
and a point $p$ in the circle such that $f(p)=p$ and $f'(p)=\lambda<1$ 
(see Theorem 1.2.7 of \cite{E-T}). 
If the stabilizer $\Stab_p(\Gamma)$ is not cyclic, 
then Corollary 3.6 implies the claim of the proposition. 
Hence we may assume that the stabilizer $\Stab_p(\Gamma)$ is cyclic. 

Since $\Gamma$ is locally nondiscrete 
with respect to the $C^1$-topology
and minimal, 
there exists a sequence $\{h_n\}$ of elements of $\Gamma$ 
which converges to the identity on a neighborhood $I_1$ of $p$ 
with respect to the $C^1$-topology. 
Since the stabilizer $\Stab_p(\Gamma)$ is cyclic by the assumption, 
by taking a subsequence if necessary, 
we may assume that $h_n(p) \ne p$ for every positive integer $n \ge 1$. 
Moreover, by taking a subinterval of $I_1$ if necessary, 
we may assume that 
there exists a local real analytic coordinate $x$ defined on $I_1$ 
such that $p=0$ and $f$ is written in the form $f(x)=\lambda x$. 
Furthermore, we may assume that in our coordinate $x$ 
the neighborhood $I_1$ of 0 contains the closed interval $[-1, 1]$. 

Now we show that there exist an open subinterval $I$ of $I_1$ 
and a sequence $\{g_n\}$ of elements of $\Gamma$ 
satisfying the assumption of Lemma 3.4. 
We follow an argument due to J. Rebelo (see Section 3 of \cite{Re}). 
For an open subset $J \subset \R$ and 
a $C^1$-map defined on $J$ onto its image in $\R$, we denote 
$||h||_{1,J} = \sup_{x \in J}(|h(x)|+|h'(x)|)$. 

\begin{lemm}
Let $I_2$ be an open and relatively compact subinterval of $I_1$. 
Assume that we are given a positive integer $n \ge 1$ and 
a positive real number $\varepsilon > 0$. 
Then there exists a positive real number $\delta > 0$ 
such that  
\begin{eqnarray*}
|(h^n-\id)(x)-n(h-\id)(x)| \le \varepsilon |(h-\id)(x)|
\end{eqnarray*}
for every map $h$ satisfying $||h-\id||_{1,I_1} < \delta$ 
and every point $x$ in $I_2$.
\end{lemm}

\begin{proof}
We prove the lemma by induction on $n$. 
The lemma obviously holds true for $n=1$ and 
we assume that the result holds true for $n \ge 1$. 
Then there exists a positive real number $\delta' > 0$ such that 
\begin{eqnarray}
|(h^n-\id)(x)-n(h-\id)(x)| \le \dfrac{\varepsilon}{2} |(h-\id)(x)|
\end{eqnarray}
for every map $h$ satisfying $||h-\id||_{1,I_1} < \delta'$ 
and every point $x$ in $I_2$. 
Let $d > 0$ be a positive real number such that 
the open interval $]x-d, x+d[$ is contained in $I_1$ 
for every point $x$ in $I_2$. 
Now we put 
$\delta=\min\{\frac{d}{n}, \frac{\varepsilon}{2n+\varepsilon}, \delta'\}$. 
We take a $C^1$-map $h : I \to \R$ satisfying $||h-\id||_{1,I_1} < \delta$ 
and a point $x$ in $I_2$ arbitrarily. 
Then for every $1 \le m \le n$ we have 
\begin{eqnarray*}
|(h^m-\id)(x)| \le \sum_{l=0}^{m-1}|(h^{l+1}-h^l)(x)| < \frac{md}{n} \le d.
\end{eqnarray*}
In particular, the point $h^n(x)$ belongs to $I_1$. 
Therefore it follows from the mean value theorem and the inequality (7) that 
\begin{eqnarray*}
& & |(h^{n+1}-\id)(x)-(n+1)(h-\id)(x)| \\
& \le & |(h^{n+1}-h^n)(x))-(h-\id)(x)|+|(h^n-\id)(x)-n(h-\id)(x))| \\
& \le & (\sup_{y \in I_1}|(h-\id)'(y)|)|(h^n-\id)(x)|+
\frac{\varepsilon}{2}|(h-\id)(x)| \\
& \le & \frac{\varepsilon}{2n+\varepsilon}(n+\frac{\varepsilon}{2})|(h-\id)(x)|
+ \frac{\varepsilon}{2}|(h-\id)(x)| = \varepsilon |(h-\id)(x)|.
\end{eqnarray*}
Thus the result holds true for $n+1$ 
and we have finished the proof of the lemma. 
\end{proof} 

\begin{lemm}
Let $I_2$ be an open and relatively compact subinterval of $I_1$. 
Assume that we are given a positive integer $n \ge 1$ and 
positive real numbers $C_1$ and $C_2$. 
Then there exists a positive real number $\delta > 0$ such that 
\begin{eqnarray*}
|(h-\id)(y)-(h-\id)(x)| < C_1|(h-\id)(x)|
\end{eqnarray*}
for every $C^1$-map $h:I_1 \to \R$ satisfying $||h-\id||_{1,I_1} < \delta$ 
and two arbitrary points $x, y$ in $I_2$ 
satisfying $|y-x| < C_2|(h^n-\id)(x)|$.
\end{lemm}

\begin{proof}
Let $\delta' > 0$ be a positive real number such that 
Lemma 3.10 holds true for $I_2$, $n$ and $\varepsilon = 1$. 
We set $\delta=\min\{\delta', \frac{C_1}{C_2(n+1)}\}$.
We take a $C^1$-map $h:I_1 \to \R$ satisfying $||h-\id||_{1,I_1} < \delta$ 
and two points $x, y$ in $I_2$ 
satisfying $|y-x| < C_2|(h^n-\id)(x)|$ arbitrarily. 
Then it follows from the mean value theorem and Lemma 3.10 that
\begin{eqnarray*}
& &|(h-\id)(y)-(h-\id)(x)| \\
& \le & (\sup_{z \in I_1} |(h-\id)'(z)|)|y-x| 
\le \delta C_2|(h^n-\id)(x)| \\
& \le & \delta C_2(n+1)|(h-\id)(x)| 
\le C_1|(h-\id)(x)|.
\end{eqnarray*}
Thus we have finished the proof of the lemma. 
\end{proof}

\begin{lemm}
Let $n \ge 1$ be a positive integer. 
Then there exists a positive real number $\delta(n) > 0$ with
\begin{eqnarray}
(n+1)\delta(n) & < & 1-\lambda
\end{eqnarray}
such that for every $C^1$-map $h:I_1 \to \R$ 
satisfying $||h-\id||_{1,I_1} < \delta(n)$ and $h(0) > 0$ 
there exists a positive integer $k \ge 1$ 
satisfying the following: 
\begin{eqnarray}
(n+1)|(h-\id)(\lambda^k)| 
& > & \lambda^k(1-\lambda), \\ 
n|(h-\id)(\lambda^k)| 
& < & \lambda^{k-1}(1-\lambda) \quad and \\
|(h-\id)(y)-(h-\id)(\lambda^k)| 
& < & \frac{1}{4}|(h-\id)(\lambda^k)| 
\end{eqnarray}
for every point $y$ in $I_1$ with $|y-\lambda^k| < \lambda^{k-1}(1-\lambda)$.
\end{lemm}

\begin{proof}
Replacing in the statement of Lemma 3.11 $I_2$ by $]-1,1[$, $n$ by $n+2$, 
$C_1$ by $\frac{1}{4(n+1)}$ and $C_2$ by $\lambda^{-1}$, 
we see that there exists a positive real number $\delta(n)>0$ 
such that if a $C^1$-map $h:I_1 \to \R$ satisfies 
$||h-\id||_{1,I_1}<\delta(n)$ then we have
\begin{eqnarray}
|(h-\id)(y)-(h-\id)(x)| 
& < & \frac{1}{4(n+1)}|(h-\id)(x)|
\end{eqnarray}
for two arbitrary points $x, y$ in $]-1,1[$ 
with $|y-x| < \lambda^{-1}|(h^{n+2}-\id)(x)|$. 
Moreover, replacing in the statement of Lemma 3.10 
$n$ by $n+2$ and $\varepsilon$ by $1$ 
and reducing $\delta(n)$ if necessary, 
we may assume that the inequality (8) holds true 
for every positive integer $n \ge 1$ and
\begin{eqnarray}
|(h^{n+2}-\id)(x)-(n+2)(h-\id)(x)| < |(h-\id)(x)|
\end{eqnarray}
for every point $x$ in $]-1,1[$. 

We fix a $C^1$-map $h:I_1 \to \R$ 
satisfying $||h-\id||_{1,I_1}<\delta(n)$ and 
$h(0) > 0$ arbitrarily. 
Then we can consider the smallest positive integer $k \ge 1$ verifying 
the inequality (9). 
Then the inequalities (9) and (13) imply that 
\begin{eqnarray}
\qquad\quad
|\lambda^{k-1}-\lambda^k| 
< \lambda^{-1}(n+1)|(h-\id)(\lambda^k)| 
< \lambda^{-1}|(h^{n+2}-\id)(\lambda^k)|. \!\!\!\!\!
\end{eqnarray}
Therefore, setting $x=\lambda^k$ and $y=\lambda^{k-1}$ 
in the inequality (12), we obtain
\begin{eqnarray}
|(h-\id)(\lambda^{k-1})-(h-\id)(\lambda^k)| 
& < & \frac{1}{4(n+1)}|(h-\id)(\lambda^k)|
\end{eqnarray}

Now we claim that the inequality (10) holds true. 
Indeed, if it were false it would follow from the inequality (15) that
\begin{eqnarray*}
(n+1)|(h-\id)(\lambda^{k-1})| 
& > & (n+\frac{3}{4})|(h-\id)(\lambda^k)| \\
& > & n|(h-\id)(\lambda^k)| 
\ge \lambda^{k-1}(1-\lambda)
\end{eqnarray*}
and this would contradict the minimality of $k$. 

Finally we claim that the inequality (11) holds true. 
Indeed, replacing $x$ by $\lambda^k$ in the inequality (12), 
it suffices to see that 
$\lambda^{k-1}(1-\lambda) < \lambda^{-1}|(h^{n+2}-\id)(\lambda^k)|$, 
which is obtained from the inequality (14).
Thus we have finished the proof of the lemma.
\end{proof}

Recall that we have a sequence $\{h_n\}$ of $C^1$-maps 
defined on $I_1$ which converges to the identity on $I_1$ 
with respect to the $C^1$-topology. 
Since $h_n(0) \ne 0$ for every  positive integer $n \ge 1$ by assumption, 
replacing $h_n$ by its inverse if necessary, 
we may assume that $h_n(0) > 0$. 
By taking a subsequence if necessary, we may also assume that 
$||h_n-\id||_{1,I_1} < \delta(n)$ for every positive integer $n \ge 1$. 
Then it follows from Lemma 3.12 that 
for each positive integer $n \ge 1$ 
there exists a positive integer $k_n \ge 1$ 
satisfying the following:
\begin{eqnarray}
(n+1)|(h_n-\id)(\lambda^{k_n})| 
& > & \lambda^{k_n}(1-\lambda), \\ 
n|(h_n-\id)(\lambda^{k_n})| 
& < & \lambda^{k_n-1}(1-\lambda) \quad \textrm{and} \\
|(h_n-\id)(y)-(h_n-\id)(\lambda^{k_n})| 
& < & \frac{1}{4}|(h_n-\id)(\lambda^{k_n})| 
\end{eqnarray}
for every point $y$ in $I_1$ 
with $|y-\lambda^{k_n}|<\lambda^{k_n-1}(1-\lambda)$. 

We set $\lambda'=\max\{0, 2\lambda-1\}$ and 
define a compact subinterval $I$ of $I_1$ by $I=[\lambda', \lambda]$. 
For each positive integer $n \ge 1$, 
we define an element $g_n$ of $\Gamma$ by 
$g_n=f^{-k_n+1}h_nf^{k_n-1}$.

\begin{lemm}
For every positive integer $n \ge 1$ and every point $x$ in $I$, we have 
\begin{eqnarray}
\frac{3}{8}\lambda(1-\lambda) 
\le n|(g_n-\id)(x)|
& \le & \frac{5}{4}(1-\lambda) \quad \textrm{and} \\
n|(g_n-\id)'(x)| 
& < & 1-\lambda.
\end{eqnarray}
\end{lemm}

\begin{proof}
First we claim that 
\begin{eqnarray}
g_n(x)=\lambda^{-k_n+1}h_n(\lambda^{k_n-1}x).
\end{eqnarray} 
Indeed, we have $0 < x < \lambda$ 
and hence the inequality (17) implies that
\begin{eqnarray*}
h_n(f^{k_n-1}(x)) 
< h_n(\lambda^{k_n}) 
< \lambda^{k_n} + \frac{1}{n}\lambda^{k_n-1}(1-\lambda) 
\le \lambda^{k_n-1}. 
\end{eqnarray*}
Moreover we have 
\begin{eqnarray*}
h_n(f^{k_n-1}(x)) > h_n(0) >0. 
\end{eqnarray*}
Hence the point $h_n(f^{k_n-1}(x))$ belongs to the open interval 
$]0, \lambda^{k_n-1}[$. 
Since $f^{-k_n+1}(y)=\lambda^{-k_n+1}y$ 
for every point $y$ in $]0, \lambda^{k_n-1}[$, 
we obtain the equality (21).

Next we prove the inequality (19). 
It follows from the equality (21) that 
\begin{eqnarray}
n|(g_n-\id)(x)|=\lambda^{-k_n+1}n|(h_n-\id)(\lambda^{k_n-1}x)|.
\end{eqnarray}
Note that $|x-\lambda| < 1-\lambda$ 
and hence $|\lambda^{k_n-1}x-\lambda^{k_n}| < \lambda^{k_n-1}(1-\lambda)$. 
Therefore, the inequality (18) implies that 
\begin{eqnarray}
\qquad\quad
\frac{3}{4}n|(h_n-\id)(\lambda^{k_n})|
<n|(h_n-\id)(\lambda^{k_n-1}x)|
<\frac{5}{4}n|(h_n-\id)(\lambda^{k_n})|. \!\!\!\!\!\!\!
\end{eqnarray}
Moreover, it follows from the inequality (16) that 
\begin{eqnarray}
\qquad
n|(h_n-\id)(\lambda^{k_n})| 
\ge \frac{1}{2}(n+1)|(h_n-\id)(\lambda^{k_n})| 
> \frac{1}{2}\lambda^{k_n}(1-\lambda). \!\!\!
\end{eqnarray}
Combining the inequalities (17), (23), (24), we have 
\begin{eqnarray*}
\frac{3}{8}\lambda^{k_n}(1-\lambda)
< n|(h_n-\id)(\lambda^{k_n-1}x)| 
< \frac{5}{4}\lambda^{k_n-1}(1-\lambda). 
\end{eqnarray*}
In view of the equality (22), 
this implies the inequality (19).

Finally we claim that the inequality (20) holds true. 
Indeed, it follows from the equality (21) and the inequality (8) that 
\begin{eqnarray*}
n|(g_n-\id)'(x)| 
\le \sup_{y \in I} n|(h_n-\id)'(\lambda^{k_n-1}y)| 
\le n\delta(n) < 1-\lambda. 
\end{eqnarray*}
Thus we have finished the proof of the lemma.
\end{proof}

In view of Lemma 3.13, Lemma 3.4 implies 
the existence of a desired vector field on $I$. 
Thus we have finished the proof of Proposition 3.9. 
\end{proof}

\section{Proof of Theorem 1.2 and Corollary 1.3}
We first prove Theorem 1.2. 
In view of Corollary 1.1 and Proposition 2.4, 
it suffices to show the following proposition: 

\begin{prop} 
Let $\Gamma$ be a subgroup of $\Diff^{\omega}_+(\S^1)$ 
which is locally nondiscrete with respect to the $C^1$-topology. 
Assume that there exists no $\Gamma$-invariant probability measure 
on the circle. 
Then $\Gamma$ has an infinite image under the rotation number function. 
\end{prop}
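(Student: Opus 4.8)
The plan is to prove the following sharper statement: the $C^0$-closure $\overline{\Gamma}$ of $\Gamma$ in $\Homeo_+(\S^1)$ contains a $C^0$-continuous one-parameter family of homeomorphisms whose rotation numbers are not all equal. Granting this, and recalling that the rotation number function is continuous for the $C^0$-topology, the set of rotation numbers of such a family is a connected subset of $\R/\Z$ with more than one point, hence infinite; it is contained in $\overline{\rho(\Gamma)}$, and since a finite subset of $\R/\Z$ is closed, $\rho(\Gamma)$ itself must be infinite.

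First I would pin down the dynamics of $\Gamma$. A finite orbit would carry a $\Gamma$-invariant probability measure, so $\Gamma$ has no finite orbit; it is locally nondiscrete for the $C^0$-topology because it is so for the $C^1$-topology, so Proposition 3.2 rules out an exceptional minimal set, and then Proposition 2.3 shows $\Gamma$ is minimal. Proposition 2.6 now provides a periodic homeomorphism $\theta$ of some period $\kappa$ commuting with $\Gamma$ together with the fact that every closed interval inside an arc $[x,\theta(x)[$ is $\Gamma$-contractible, and Proposition 3.9 provides a nowhere-zero $C^0$ vector field $X$ defined on an open interval which is in the $C^0$-closure of $\Gamma$ relative to that interval; denote by $\{\varphi^{t}\}$ its local flow.

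The core is a surgery that uses $\{\varphi^{t}\}$, together with the element $h$ of Corollary 2.7, to build a continuous path inside $\overline{\Gamma}$ along which the rotation number moves. Using minimality and the $\Gamma$-contractibility of large intervals (Proposition 2.6), I would first conjugate the flow within $\overline{\Gamma}$ so that it is available on an arc $J$ equal to the complement of an arbitrarily short arc, and so that $J$ contains a fundamental domain of $\theta$ as well as all the pieces $\theta^{j}(I_0)$ of a suitably tiny closed interval $I_0$; Corollary 2.7 then supplies $h\in\Gamma$ with $\emptyset\ne\Fix(h)\subset U:=\bigcup_{j=0}^{\kappa-1}\Int(\theta^{j}(I_0))$ and $h(\S^1\setminus U)\subset U$, so that $h$ has a fixed point, whence $\rho(h)=0$ by Poincar\'e's theorem, and $h$ confines almost the whole circle into the tiny set $U$. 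For $t$ in an interval $[0,\epsilon]$ I would then produce $g_t\in\overline{\Gamma}$ as a $C^0$-limit of elements of $\Gamma$ of the form $h\circ(\text{an approximant of }\varphi^{t})\circ h$ given by Proposition 3.9, the two copies of $h$ serving to swallow into $U\subset J$ the parts of the approximant on which $\varphi^{t}$ is not controlled. The aim is to arrange that $g_0$ has a fixed point, so $\rho(g_0)=0$, while for $t$ near $\epsilon$ the flow has dragged a fundamental domain once along the long arc $J$, enough to destroy the confinement and leave $g_t$ without fixed points, so $\rho(g_t)\ne 0$; since the construction should make $t\mapsto g_t$ a $C^0$-continuous path in $\overline{\Gamma}$, the function $t\mapsto\rho(g_t)$ is then continuous and non-constant, and the conclusion follows as above.

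Two steps are the real work. The first is making the surgery legitimate: because the flow of Proposition 3.9 is controlled only on a proper sub-arc, one must check that for every $t$ the composition is a genuine $C^0$-limit of elements of $\Gamma$ that converge globally and depend continuously on $t$; this is where the long arc $J$, the confining element $h$, and --- for $\kappa>1$ --- the symmetry $\theta$, used to carry the flow simultaneously to a neighborhood of each piece $\theta^{j}(I_0)$, all come in, and extracting the global limits will need an Ascoli-Arzela/Helly-selection argument for monotone degree-one maps. The second step, which I expect to be the main obstacle, is the rotation-number control itself: since the approximants are not controlled on all of the circle, one must argue --- presumably via semi-conjugacy (Lemma 2.2) together with a lift and winding estimate that exploits the fixed sign of $X$, so that $t\mapsto\widetilde{g_t}$ is nondecreasing --- that $\rho(g_0)$ and $\rho(g_{\epsilon})$ are nonetheless pinned down and distinct.
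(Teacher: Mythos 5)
Your reduction is sound in principle: $\rho$ is $C^0$-continuous, so a connected family in $\overline{\Gamma}$ with non-constant rotation number would force $\overline{\rho(\Gamma)}$, hence $\rho(\Gamma)$, to be infinite; and you have assembled exactly the right ingredients (minimality, the periodic $\theta$ of Proposition 2.6, the confining element $h$ of Corollary 2.7, the local flow of Proposition 3.9). The gap is in the construction of the family. The approximants of $\varphi^t$ furnished by Proposition 3.9 are controlled only on a compact subinterval $I_0$ of $I$, and the surgery $h\circ(\text{approximant})\circ h$ does not confine the composition: $h$ maps $\S^1\setminus U$ into $U$, but it does not map $U$ into $U$ (since $h$ is a bijection, $h(U)\supset \S^1\setminus U$), and the approximant is uncontrolled on most of $U$. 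So there is no reason the compositions converge on all of $\S^1$, let alone to a homeomorphism depending continuously on $t$; Helly selection only produces, subsequence by subsequence, a monotone limit that need not be continuous, injective, or canonically attached to $t$. The second gap, which you flag yourself, is the claim that $\rho(g_t)\ne 0$ for $t$ near $\epsilon$: ``dragging a fundamental domain along $J$'' does not by itself exclude fixed points of the composition, and this is precisely the hard estimate.

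The paper closes both gaps with a different device, and it never needs limit homeomorphisms or a continuous path. It works in the universal cover with the single composition $\tilde f_i=\tilde h^{-1}\tilde g_i$, where $\tilde g_i$ approximates $\tilde\varphi^{t_i}$ on $\tilde I_0$ only and $t_i$ decreases to the critical time $T=\sup\{t\ge 0\ |\ \tilde\varphi^t(x)=\tilde h(x)\ \text{for some } x\in\tilde I_0\}$ at which the flow first touches $\tilde h$ from above. Global control on the gaps between the intervals $\tilde\theta^j(\tilde I_0)$ comes for free from monotonicity and $\tilde\theta$-equivariance: for $x\in[b,\tilde\theta(a)]$ one squeezes $\tilde g_i(x)$ between $\tilde g_i(b)$ and $\tilde g_i(\tilde\theta(a))=\tilde\theta(\tilde g_i(a))$, both controlled because $a,b\in\tilde I_0$, and combines this with $\tilde h([b,\tilde\theta(a)])\subset\Int(\tilde I_0)$ to get $\tilde h(x)<\tilde g_i(x)<\tilde h(x)+1$ everywhere, hence $0<\tau(\tilde f_i)<1$ and $\rho(f_i)\ne 0$. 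That $\rho(f_i)\to 0$ is then proved not by continuity of $\rho$ along a path but by iterating the touching point $x_0$ (where $\tilde\varphi^T(x_0)=\tilde h(x_0)$) and showing $|\tilde f_i^n(x_0)-x_0|<1$ for $i$ large. To rescue your outline you would need to replace the two-sided surgery by this one-sided composition together with the monotonicity squeeze, and replace the appeal to a continuous family by the touching-time argument.
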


Note that in Proposition 4.1 
we only assume that $\Gamma$ is \textit{locally} nondiscrete 
with respect to the $C^1$-topology. 
The assumption implies that $\Gamma$ has no finite orbit 
and hence it is minimal by Proposition 3.2. 
Then we can take a homeomorphism $\theta$ in $\Homeo_+(\S^1)$ 
which satisfies the conditions in Proposition 2.6. 
We denote by $\kappa$ the period of $\theta$. 

Furthermore by Proposition 3.9, 
there exist an open interval $I$ in the circle and 
a nowhere zero local vector field $X$ on $I$ 
which is in the $C^0$-closure of $\Gamma$ relative to $I$. 
We take a closed subinterval $I_0$ of $I$ such that 
there exists a positive real number $t_0 > 0$ 
such that the local flow $\varphi^t$ associated to $X$ is defined on $I_0$ 
for $-t_0 \leq t \leq t_0$ and 
$\varphi^{t_0}(I_0)$ does not intersect $I_0$. 
Replacing $I_0$ by its closed subinterval if necessary, 
the intervals $I_0$, $\theta(I_0)$, \ldots, $\theta^{\kappa-1}(I_0)$
are mutually disjoint. 
Then it follows from Corollary 2.7 that 
there exists an element $h$ of $\Gamma$ such that 
\begin{eqnarray*}
\emptyset \ne \Fix(h) 
& \subset & \textstyle\bigcup_{j=0}^{\kappa-1}\Int(\theta^j(I_0)) 
\ \textrm{and} \\
h(\S^1 \setminus \textstyle\bigcup_{j=0}^{\kappa-1}(\theta^j(I_0)) & 
\subset & \textstyle\bigcup_{j=0}^{\kappa-1}\Int(\theta^j(I_0)). 
\end{eqnarray*}

Let $\tilde{h}$ denote the lift of $h$ under the covering projection 
$\pi : \R \to \S^1$ which has fixed points and
let $\tilde{\theta}$ denote the lift of $\theta$ 
under the covering projection $\pi : \R \to \S^1$ 
such that $\tilde{\theta}^{\kappa}(x)=x+1$ for every point $x$ in $\R$. 
Note that the homeomorphism $\tilde{\theta}$ commutes with 
every lift of any element of $\Gamma$. 
Let $\tilde{I}$ be a connected component of $\pi^{-1}(I)$ and 
let $\tilde{I}_0$ be the connected component of $\pi^{-1}(I_0)$ 
which is contained in $\tilde{I}$. 
We put $\tilde{I}_0=[a, b]$. 
The properties of $h$ imply that 
\begin{eqnarray}
\Fix(\tilde{h}) 
& \subset & \textstyle\bigcup_{j \in \Z}
\Int(\tilde{\theta}^j(\tilde{I}_0)) 
\ \textrm{and} \\ 
\tilde{h} (\R \setminus \textstyle\bigcup_{j \in \Z} 
(\tilde{\theta}^j(\tilde{I}_0)) 
& \subset & \textstyle\bigcup_{j \in \Z}
\Int(\tilde{\theta}^j(\tilde{I}_0)). 
\end{eqnarray}
Note that the relations (25) and (26) still hold 
if we replace $\tilde{h}$ by its inverse. 
It follows from (25) that
\begin{eqnarray*}
\Fix(\tilde{h}) \cap [a, \tilde{\theta}(a)[ 
\subset ]a, b[ = \Int(\tilde{I}_0). 
\end{eqnarray*}
By replacing $h$ by its inverse if necessary, 
we may assume that $\tilde{h}(a) < a$. 
Then it follows that 
$\tilde{h}(\tilde{\theta}(a)) < \tilde{\theta}(a)$.
Since $\tilde{h}$ has a fixed point in $]a, b[$ 
and has no fixed point in $[b, \tilde{\theta}(a)[$, 
we have
\begin{eqnarray}
a < \tilde{h}(b) < b. 
\end{eqnarray}
Since the closed interval $[b, \tilde{\theta}(a)]$ is a connected component 
of the complement of 
$\textstyle\bigcup_{j \in \Z} \Int(\tilde{\theta}^j(\tilde{I}_0))$, 
it follows from (26) and (27) that
\begin{eqnarray*}
\tilde{h}([b, \tilde{\theta}(a)]) \subset ]a,b[ = \Int(\tilde{I}_0). 
\end{eqnarray*}
In particular we have 
\begin{eqnarray}
\tilde{h}(\tilde{\theta}(a)) < b.
\end{eqnarray}

We denote by $\tilde{X}$ the pullback of the local vector field $X$ 
to $\tilde{I}$.
Then the local flow $\tilde{\varphi}^t$ associated to $\tilde{X}$ 
is defined on $\tilde{I}_0$ for $-t_0 \leq t \leq t_0$. 
Since it follows from the assumption that 
$\tilde{\varphi}^{t_0}(\tilde{I}_0)$ and $\tilde{I}_0$ are disjoint, 
replacing $X$ by $-X$ if necessary, 
we may assume that $\tilde{\varphi}^{t_0}(a) > b$. 
Since $X$ is nowhere zero, 
this implies that 
\begin{eqnarray*}
\tilde{\varphi}^t(x) > x 
\end{eqnarray*}
for every positive real number $t \le t_0$ 
and every point $x$ in $\tilde{I}_0$. 
Now we consider the real number $T \in \R$ defined as follows:
\begin{eqnarray*}
T=\sup\{t \geq 0\ |\ \tilde{\varphi}^t(x)=\tilde{h}(x)\ 
\textrm{for a certain point $x$ in $\tilde{I}_0$}\}. 
\end{eqnarray*}
Since $\tilde{h}(b) < b < \tilde{\varphi}^{t_0}(a)$, 
this number $T$ is well-defined and we have $T < t_0$. 
It follows from the definition of $T$ that 
\begin{eqnarray}
\tilde{h}(x) \leq \tilde{\varphi}^T(x) < \tilde{h}(x)+1 \ 
\textrm{for every point $x$ in $\tilde{I}_0$}. 
\end{eqnarray}
Moreover, there exists a point $x_0$ in $\tilde{I}_0$ 
such that $\tilde{\varphi}^T(x_0) = \tilde{h}(x_0)$. 
Since we have $\tilde{h}(a) < a$ and $\tilde{h}(b) < b$, 
this point $x_0$ belongs to the interior of $\tilde{I}_0$. 
Therefore we have
\begin{eqnarray}
\tilde{\varphi}^T(a) < \tilde{\varphi}^T(x_0) 
= \tilde{h}(x_0) < \tilde{h}(b). 
\end{eqnarray}

We take a decreasing sequence $\{t_i\}_{i=1}$ of positive real numbers 
with $t_1 < t_0$ which converges to $T$. 
Then for every positive integer $i \ge 1$ there exists a sequence 
$\{\tilde{g}_{i,j}\}_{j=1}^{\infty}$ of lifts of elements of $\Gamma$ 
under the covering projection $\pi : \R \to \S^1$ 
which converges to $\tilde{\varphi}^{t_i}$ uniformly on $\tilde{I}_0$ 
as $j$ tends to the infinity. 
By taking subsequences if necessary, we may assume that
\begin{eqnarray*}
\tilde{g}_{i,j}(x) > \tilde{\varphi}^T(x)
\end{eqnarray*}
for arbitrary two positive integers $i \ge 1$, $j \ge 1$, 
and every point $x$ in $\tilde{I}_0$. 
Now we put $\tilde{g}_i=\tilde{g}_{i,i}$ for every positive integer $i \ge 1$. 
Then it follows that 
the sequence $\{\tilde{g}_i\}$ converges to $\tilde{\varphi}^T$ 
uniformly on $\tilde{I}_0$ 
and for every positive integer $i \ge 1$ we have
\begin{eqnarray}
\tilde{g}_i(x) > \tilde{\varphi}^T(x) > x \ 
\textrm{for every point $x$ in $\tilde{I}_0$}. 
\end{eqnarray}
In view of the inequalities (29) and (30), 
by taking a subsequence if necessary, 
we may assume that for every positive integer $i \ge 1$ we have 
\begin{eqnarray}
\tilde{h}(x) < \tilde{g}_i(x) < \tilde{h}(x)+1 \ 
\textrm{for every point $x$ in $\tilde{I}_0$ and} \\
\tilde{g}_i(a) < \tilde{h}(b). 
\end{eqnarray}

We put $\tilde{f}_i=\tilde{h}^{-1}\tilde{g}_i$ 
for every positive integer $i \ge 1$. 
Now we estimate the translation number 
$\tau(\tilde{f}_i)$ of $\tilde{f}_i$. 

\begin{lemm}
\begin{enumerate}
    \item We have $0 < \tau(\tilde{f}_i) <1$
          for every positive integer $i \ge 1$. 
    \item The sequence $\{\tau(\tilde{f}_i)\}$ converges to zero 
          as $i$ tends to the infinity.
\end{enumerate}
\end{lemm}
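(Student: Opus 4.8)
The plan is to analyze the circle homeomorphism $\tilde f_i=\tilde h^{-1}\tilde g_i$ directly, using two facts recorded above. First, since $\tilde\theta$ commutes with every lift of an element of $\Gamma$, it commutes with $\tilde h$ and $\tilde g_i$, hence with $\tilde f_i$; so any inequality valid for $\tilde f_i$ on $\tilde I_0$ propagates, via $\tilde f_i\tilde\theta^j=\tilde\theta^j\tilde f_i$, to every translate $\tilde\theta^j(\tilde I_0)$. Second, applying $\tilde h^{-1}$ to $\tilde h(x)<\tilde g_i(x)<\tilde h(x)+1$ on $\tilde I_0$ gives $x<\tilde f_i(x)<x+1$ on $\tilde I_0$, hence on $\bigcup_j\tilde\theta^j(\tilde I_0)$; and applying $\tilde h^{-1}$ to $\tilde g_i(a)<\tilde h(b)$ gives $\tilde f_i(a)<b$, so $\tilde f_i(a)\in\Int(\tilde I_0)$.

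For the first assertion I would establish $\tau(\tilde f_i)>0$ and $\tau(\tilde f_i)<1$ separately, each by excluding a forbidden point. If $\tau(\tilde f_i)=0$ then $\tilde f_i$ has a fixed point $z$; since $\tilde f_i>\id$ on every $\tilde\theta^j(\tilde I_0)$, the point $z$ lies in one of the open complementary gaps, and by $\tilde\theta$-equivariance we may take $z\in\,]b,\tilde\theta(a)[$. Since $\tilde h$ maps $[b,\tilde\theta(a)]$ into $]a,b[$ (an inequality established above), $\tilde g_i(z)=\tilde h(z)<b$, while $z>b$ and $\tilde g_i(b)>b$ force $\tilde g_i(z)>b$ --- a contradiction. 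Hence $\tilde f_i$ has no fixed point, and being $>\id$ on $\tilde I_0$ it is $>\id$ everywhere, so $\tau(\tilde f_i)>0$. Dually, if $\tau(\tilde f_i)\ge1$ then the continuous $1$-periodic function $\tilde f_i-\id$, which takes a value $<1$ on $\tilde I_0$, cannot be everywhere $<1$ (that would force $\tau(\tilde f_i)<1$), so by the intermediate value theorem there is a point $x$ with $\tilde f_i(x)=x+1$, i.e.\ $\tilde g_i(x)=\tilde h(x)+1$. Since $\tilde f_i<\id+1$ on every $\tilde\theta^j(\tilde I_0)$, this $x$ lies in a gap, so we may take $x\in\,]b,\tilde\theta(a)[$, whence $x>b$; then, using $x<\tilde\theta(a)$, the commutation $\tilde g_i\tilde\theta=\tilde\theta\tilde g_i$, the relation $\tilde g_i(a)<\tilde h(b)$, and the obvious inequality $\tilde\theta(y)\le y+1$,
\[
\tilde h(x)+1=\tilde g_i(x)\le\tilde g_i(\tilde\theta(a))=\tilde\theta(\tilde g_i(a))<\tilde\theta(\tilde h(b))\le\tilde h(b)+1,
\]
so $\tilde h(x)<\tilde h(b)$ and $x<b$, contradicting $x>b$. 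Thus $\tau(\tilde f_i)<1$.

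For the second assertion I set $\tilde\psi:=\tilde h^{-1}\tilde\varphi^T$ on $\tilde I_0$, an increasing map. From $\tilde h\le\tilde\varphi^T$ on $\tilde I_0$ one gets $\tilde\psi\ge\id$ there, and from $\tilde\varphi^T(x_0)=\tilde h(x_0)$ with $x_0\in\Int(\tilde I_0)$ one gets $\tilde\psi(x_0)=x_0$; monotonicity then forces $\tilde\psi([a,x_0])\subseteq[a,x_0]$, so $\tilde\psi^M(a)\le x_0<b<a+1$ for every $M\ge1$. Since $\tilde g_i\to\tilde\varphi^T$ uniformly on $\tilde I_0$ and $\tilde h^{-1}$ is uniformly continuous on compact sets, $\tilde f_i\to\tilde\psi$ uniformly on $\tilde I_0$. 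A routine induction on $k$ --- at each step combining this uniform convergence with the continuity of $\tilde\psi$ and the fact that $\tilde\psi^k(a)\le x_0<b$, so the iterates cannot leave $\tilde I_0$ --- shows that for each fixed $M$ one has $\tilde f_i^M(a)\to\tilde\psi^M(a)$ as $i\to\infty$. Hence $\tilde f_i^M(a)<a+1$ for all large $i$, and by the standard fact that $\tilde F^M(y)\le y+1$ for some $y$ implies $\tau(\tilde F)\le 1/M$, we get $\tau(\tilde f_i)\le 1/M$ for all large $i$. Since $M$ is arbitrary and $\tau(\tilde f_i)>0$, this yields $\tau(\tilde f_i)\to0$.

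The main obstacle is the second assertion: one must control not merely $\tilde f_i(a)$ but all iterates $\tilde f_i^M(a)$, and what makes this feasible is that the limit map $\tilde\psi$ has an \emph{interior} fixed point $x_0$ of $\tilde I_0$, hence leaves the compact interval $[a,x_0]\subset\Int(\tilde I_0)$ invariant, so the $\tilde f_i$-orbit of $a$ cannot escape the zone of uniform convergence before completing a full turn. In the first assertion the only slightly delicate points are the bookkeeping with $\tilde\theta$-equivariance and the use of the precise location of the image $\tilde h([b,\tilde\theta(a)])\subset\,]a,b[$ to derive the two contradictions.
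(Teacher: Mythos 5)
Your proposal is correct and follows essentially the same route as the paper: part (i) rests on the same chain of inequalities (27), (28), (31)--(33) propagated by $\tilde{\theta}$-equivariance (merely recast as two proofs by contradiction instead of the paper's direct verification of $\tilde{h}(x)<\tilde{g}_i(x)<\tilde{h}(x)+1$ on a fundamental domain), and part (ii) exploits exactly the paper's key point that $x_0$ is an interior fixed point of $\tilde{h}^{-1}\tilde{\varphi}^T$, so that the $\tilde{f}_i$-orbit stays in the zone of uniform convergence for $M$ steps and cannot complete a full turn. Your packaging of part (ii) via the $\tilde{\psi}$-invariant interval $[a,x_0]$ and the limit $\tilde{f}_i^M(a)\to\tilde{\psi}^M(a)$ is slightly cleaner bookkeeping than the paper's explicit $\delta(n,\varepsilon)$ induction on the consecutive differences $|\tilde{f}_i^j(x_0)-\tilde{f}_i^{j-1}(x_0)|$, but it is the same argument.
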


\begin{proof}
(i) We take a point $x$ in $]b, \tilde{\theta}(a)[$ arbitrarily. 
Then it follows from the inequalities (28) and (31) that 
\begin{eqnarray*}
\tilde{h}(x) < \tilde{h}(\tilde{\theta}(a)) < b < \tilde{g}_i(b). 
\end{eqnarray*}
Furthermore the inequality (33) implies that 
\begin{eqnarray*}
\tilde{g}_i(\tilde{\theta}(a)) = \tilde{\theta}(\tilde{g}_i(a)) 
< \tilde{\theta}(\tilde{h}(b)) \le \tilde{h}(b)+1 < \tilde{h}(x)+1. 
\end{eqnarray*}
Thus we have 
\begin{eqnarray*}
\tilde{h}(x) < \tilde{g}_i(b) 
< \tilde{g}_i(x) 
< \tilde{g}_i(\tilde{\theta}(a)) < \tilde{h}(x)+1. 
\end{eqnarray*}
Together with the inequality (32), this implies that 
\begin{eqnarray*}
\tilde{h}(x) < \tilde{g}_i(x) < \tilde{h}(x)+1
\end{eqnarray*}
for every point $x$ in $[a,\tilde{\theta}(a)[$. 

For every point $x$ in $\R$, there exists an integer $j \in \Z$ such that 
the point $\tilde{\theta}^j(x)$ belongs to the interval 
$[a,\tilde{\theta}(a)[$. 
Since every lift of any element of $\Gamma$ commutes with $\tilde{\theta}$, 
we have 
\begin{eqnarray*}
\tilde{h}(x) < \tilde{g}_i(x) < \tilde{h}(x) +1. 
\end{eqnarray*}
This implies that $x < \tilde{f}_i(x) < x+1$. 
Since $x$ is an arbitrary point in $\R$, we have $0 < \tau(\tilde{f}_i) < 1$. 
Thus we have finished the proof of (i). \\

\noindent
(ii) Since the map $\tilde{h}^{-1}$ is continuous and 
commutes with integer translations, it is uniformly continuous. 
Hence for every positive real number $\varepsilon > 0$ 
there exists a positive real number $\delta_{h^{-1}}(\varepsilon) > 0$ 
such that for $|x-y| < \delta_{h^{-1}}(\varepsilon)$, we have 
\begin{eqnarray}
|\tilde{h}^{-1}(x)-\tilde{h}^{-1}(y)| < \varepsilon. 
\end{eqnarray}

We fix a positive real number $0<d<1$ such that 
$]x_0-d, x_0+d[ \subset \tilde{I}_0$. 
Now we show the following claim: \\

\noindent
\textbf{Claim}\ 
For every positive integer $n \ge 1$ and 
every positive real number $\varepsilon > 0$ with 
$\varepsilon < \frac{d}{n}$, 
there exists a positive real number $\delta(n,\varepsilon) > 0$ 
such that $\sup_{x \in \tilde{I}_0} 
|\tilde{g}_i(x)-\tilde{\varphi}^T(x)| < \delta(n,\varepsilon)$ 
implies
\begin{eqnarray*}
|\tilde{f}_i^j(x_0)-\tilde{f}_i^{j-1}(x_0)| < \varepsilon
\end{eqnarray*}
for $j=1,\ldots, n$.

\begin{proof}
The proof is done by induction on $n$. 
We first consider the case $n=1$. 
We fix a positive real number $\varepsilon > 0$ with $\varepsilon < d$ 
arbitrarily.
Now we put $\delta(1,\varepsilon) = \delta_{h^{-1}}(\varepsilon)$. 
If $\sup_{x \in \tilde{I}_0} |\tilde{g}_i(x)-\tilde{\varphi}^T(x)| 
< \delta(1,\varepsilon)$ then we obtain 
\begin{eqnarray*}
|\tilde{g}_i(x_0)-\tilde{h}(x_0)|
=|\tilde{g}_i(x_0)-\tilde{\varphi}^T(x_0)| < \delta(1,\varepsilon). 
\end{eqnarray*}
Hence it follows from the inequality (34) that 
\begin{eqnarray*}
|\tilde{f}_i(x_0)-x_0| 
= |\tilde{h}^{-1}(\tilde{g}_i(x_0)) - \tilde{h}^{-1}(\tilde{h}(x_0))| 
< \varepsilon
\end{eqnarray*}
and thus we have finished the proof for the case $n=1$. 

Next we assume that the claim is true for a positive integer $n \ge 1$. 
We take a positive real number $\varepsilon > 0$ 
with $\varepsilon < \frac{d}{n+1}$ arbitrarily. 
Then there exists a positive real number $\delta_1>0$ such that 
$\sup_{x \in \tilde{I}_0} |\tilde{g}_i(x)-\tilde{\varphi}^T(x)| < \delta_1$
implies 
\begin{eqnarray}
|\tilde{f}_i^j(x_0)-\tilde{f}_i^{j-1}(x_0)| < \varepsilon 
\end{eqnarray}
for $j=1, \ldots, n$. 
Since the map $\tilde{\varphi}^T$ is uniformly continuous 
on $\tilde{I}_0$, 
there exists a positive real number $\delta_2 >0$ such that 
for arbitrary two points $x, y$ in $\tilde{I}_0$ 
with $|x-y| < \delta_2$ 
we have
\begin{eqnarray}
|\tilde{\varphi}^T(x)-\tilde{\varphi}^T(y)| < 
\frac{1}{3}\delta_{h^{-1}}(\varepsilon).
\end{eqnarray}
Moreover by the assumption of the induction 
there exists a positive real number $\delta_3 >0$ such that 
$\sup_{x \in \tilde{I}_0} |\tilde{g}_i(x)-\tilde{\varphi}^T(x)| 
< \delta_3$ 
implies
\begin{eqnarray}
|\tilde{f}_i^n(x_0)-\tilde{f}_i^{n-1}(x_0)| < \delta_2.
\end{eqnarray}

Now we put 
$\delta(n,\varepsilon) 
= \min\{\frac{1}{3}\delta_{h^{-1}}(\varepsilon), \delta_1, \delta_3\}$. 
Then by the inequality (37), if 
$\sup_{x \in \tilde{I}_0} 
|\tilde{g}_i(x)-\tilde{\varphi}^T(x)|<\delta(n, \varepsilon)$ 
we have
\begin{eqnarray*}
|\tilde{f}_i^n(x_0)-\tilde{f}_i^{n-1}(x_0)| < \delta_2.
\end{eqnarray*}
Then it follows from the inequality (36) that 
\begin{eqnarray*}
& & |\tilde{g}_i(\tilde{f}_i^n(x_0)) - \tilde{g}_i(\tilde{f}_i^{n-1}(x_0))| \\
& \leq & |\tilde{\varphi}^T(\tilde{f}_i^n(x_0)) 
          - \tilde{\varphi}^T(\tilde{f}_i^{n-1}(x_0))| 
          + 2\sup_{x \in \tilde{I}_0} 
          |\tilde{g}_i(x)-\tilde{\varphi}^T(x)| \\
& < & \frac{1}{3}\delta_{h^{-1}}(\varepsilon)
      + \frac{2}{3}\delta_{h^{-1}}(\varepsilon) 
= \delta_{h^{-1}}(\varepsilon). 
\end{eqnarray*}
Therefore the inequality (34) implies that
\begin{eqnarray}
\qquad\quad
|\tilde{f}_i^{n+1}(x_0)-\tilde{f}_i^n(x_0)| 
 = |\tilde{h}^{-1}\tilde{g}_i(\tilde{f}_i^n(x_0))
- \tilde{h}^{-1}\tilde{g}_i(\tilde{f}_i^{n-1}(x_0))| < \varepsilon. 
\end{eqnarray}
Then it follows from the inequalities (35) and (38) that 
the claim is true for $n+1$. 
Thus we have finished the proof of the claim. 
\end{proof}

We complete the proof of (ii) of Lemma 4.2. 
For every positive integer $n \ge 1$ and 
every positive real number $\varepsilon > 0$ with $\varepsilon < \frac{d}{n}$, 
we have 
$\sup_{x \in \tilde{I}_0} |\tilde{g}_i(x)-\tilde{\varphi}^T(x)| 
< \delta(n,\varepsilon)$ 
for every sufficiently large positive integer $i \ge 1$. 
Then  it follows from the above claim that
for every such positive integer $i \ge 1$, we have
\begin{eqnarray*}
|\tilde{f}_i^n(x_0)-x_0| 
\le \sum_{j=1}^n |\tilde{f}_i^j(x_0)-\tilde{f}_i^{j-1}(x_0)| 
< n\varepsilon < d < 1.
\end{eqnarray*}
This implies that $|\tau(\tilde{f}_i)| \leq \frac{1}{n}$ 
and we have finished the proof of (ii). 
Thus we have finished the proof of the lemma.
\end{proof}

Now we complete the proof of Proposition 4.1. 
For each positive integer $i \ge 1$, 
the diffeomorphism $\tilde{f}_i$ on $\R$ determines 
an element $f_i$ of $\Gamma$. 
Then it follows from Lemma 4.2 that the rotation number $\rho(f_i)$ 
of $f_i$ is not equal to zero for every positive integer $i \ge 1$ and 
tends to zero as $i$ tends to the infinity. 
Thus we have finished the proof of Proposition 4.1 
and hence the proof of Theorem 1.2 has been completed. 

\begin{rema}
\rm{
In view of Selberg's lemma, we can show that 
every finitely generated subgroup of $\PSL(2, \R)$ 
satisfying the assumption of Proposition 4.1 
contains an element whose rotation number is irrational. 
Hence it would be interesting to know 
if this assertion still holds true 
when we replace $\PSL(2, \R)$ by $\Diff^{\omega}_+(\S^1)$.
}
\end{rema}

Next we prove Corollary 1.3. 
The first assertion immediately follows from Theorem 1.2. 
To prove the second assertion, 
we take a subgroup $\Gamma'$ of $\Gamma$ arbitrarily. 
Assume that $\Gamma'$ contains no subgroup 
isomorphic to a nonabelian free group. 
Then it follows from Margulis' theorem that 
there exists a $\Gamma'$-invariant probability measure on the circle. 
Since $\Gamma'$ has a finite image under the rotation number function 
as well as $\Gamma$, 
Proposition 2.4 implies that $\Gamma'$ has a finite orbit.
Hence there exists a point $p$ in the circle such that 
the stabilizer $\Stab_p(\Gamma')$ is a finite index subgroup of $\Gamma'$. 

Now we claim that $\Stab_p(\Gamma')$ is cyclic. 
Indeed, if it were not cyclic, then Proposition 3.7 would imply that 
$\Gamma'$ would be locally nondiscrete with respect to the $C^1$-topology 
and so would $\Gamma$. 
Hence it would follow from Proposition 4.1 that 
$\Gamma$ would have an infinite image under the rotation number, 
which contradicts the assumption. 
Thus we have finished the proof of Corollary 1.3.

\begin{rema}
\rm{
Neither Theorem 1.2 nor Corollary 1.3 holds true 
if we replace $\Diff^{\omega}_+(\S^1)$ by $\Diff^{\infty}_+(\S^1)$. 
To see this, we construct a finitely generated subgroup $\Gamma$ of 
$\Diff^{\infty}_+(\S^1)$ satisfying the following conditions: 
\begin{itemize}
\item $\Gamma$ is nondiscrete with respect to the $C^{\infty}$-topology, 
\item $\Gamma$ contains a subgroup isomorphic to $\Z \oplus \Z$, 
\item $\Gamma$ has a trivial image under the rotation number function, 
      that is, $\rho(\Gamma)=\{0\}$ and 
\item $\Gamma$ has no finite orbit. 
\end{itemize}

Let $\Gamma_1$ be a finitely generated and torsionfree Fuchsian group 
which has an exceptional minimal set $C$. 
We take a connected component $I$ of the complement of the minimal set $C$ 
and a nontrivial $C^{\infty}$-vector field $X$ on the circle which vanishes 
outside the interval $I$. 
Let $\Gamma_2$ be a dense subgroup of the flow 
associated to $X$ which is isomorphic to $\Z \oplus \Z$. 
Now we define $\Gamma$ to be the subgroup of $\Diff^{\infty}_+(\S^1)$
generated by $\Gamma_1$ and $\Gamma_2$. 

Then $\Gamma$ is nondiscrete 
with respect to the $C^{\infty}$-topology as well as $\Gamma_2$. 
To see that $\Gamma$ has a trivial image under the rotation number function, 
we claim that every element of $\Gamma$ has a fixed point in $C$. 
Indeed, for every element $\gamma$ of $\Gamma$, 
there exist elements $\gamma_{1,1}, \ldots, \gamma_{1,k}$ of $\Gamma_1$ 
and $\gamma_{2,1}, \ldots, \gamma_{2,k}$ of $\Gamma_2$ such that 
$\gamma=\gamma_{1,1} \gamma_{2,1} \cdots \gamma_{1,k} \gamma_{2,k}$. 
Then the product $\gamma_{1,1}\cdots\gamma_{1,k}$ is an element of $\Gamma_1$ 
and hence has a fixed point $x$ in $C$. 
Since the set $C$ is invariant under the action of $\Gamma_1$ 
and fixed pointwise by the action of $\Gamma_2$, 
it follows that $\gamma(x) = x$ and we have finished the proof of the claim. 
We can also see that the set $C$ is an exceptional minimal set for $\Gamma$ 
and hence $\Gamma$ has no finite orbit. 
Thus we proved that the group $\Gamma$ satisfies the desired conditions.
}
\end{rema}

\section*{Acknowledgements}
This paper is the main part of my doctoral thesis. 
I would like to thank my doctoral thesis adviser, Professor Takashi Tsuboi, 
for many helpful comments and proofreading of earlier versions of this paper. 
He suggested that we might improve an earlier version of Proposition 4.1, 
which only asserted the existence of an element with nonzero rotation number. 

I also would like to thank Professor Shigenori Matsumoto for helping me with 
some classical theory of dynamical systems on the circle. 
He suggested that an earlier version of Corollary 1.3 might hold true, 
which only asserted the nonexistence of a subgroup isomorphic to 
a free abelian group of rank two.
This suggestion motivated me to start this research. 

I also would like to thank Professor Julio Rebelo for discussing 
when I visited Pontificia Universidade Cat\'olica do Rio de Janeiro. 
He suggested that we might improve an earlier version of Theorem 1.2, 
which assumed the existence of a finite set of generators 
sufficiently close to the identity. 

\nocite{*}
\bibliographystyle{plain}
\bibliography{rotnum_arxiv-ref}

\end{document}